\documentclass[a4paper,12pt]{article}
\usepackage[dvipdfmx]{graphicx}
\setlength{\oddsidemargin}{-.5cm}   \setlength{\evensidemargin}{-.5cm}
\setlength{\textwidth}{17cm}    \setlength{\topmargin}{-1.3cm}
\setlength{\textheight}{24cm}   \setlength{\headheight}{.1in}
\setlength{\headsep}{.3in}  
\setlength{\parskip}{.5mm}
\usepackage{amssymb}
\usepackage{amsmath}
\usepackage{amsthm}
\usepackage{verbatim,enumerate}
\usepackage[active]{srcltx}
\usepackage{comment}
\usepackage[noadjust]{cite}

 \newtheorem{theorem}{Theorem}[section]
 \newtheorem{proposition}[theorem]{Proposition}
 \newtheorem{fact}[theorem]{Fact}
 \newtheorem{lemma}[theorem]{Lemma}
 
 \newtheorem{introtheorem}{Theorem}
   
\theoremstyle{definition}
 \newtheorem{definition}[theorem]{Definition}

\numberwithin{equation}{section}
\numberwithin{figure}{section}

\usepackage[usenames]{color}

\makeatletter
	
	\@addtoreset{table}{section}
\makeatother

\title{Geometry of bifurcation sets of generic unfoldings
of corank two functions}
\author{Kentaro Saji \and Samuel P. Santos}
\date{\today}
\begin{document}
\maketitle
\renewcommand{\thefootnote}{\fnsymbol{footnote}}
\footnote[0]{2020 Mathematics Subject classification. 
Primary 53A05; 
Secondary 58K05. 
}
\footnote[0]{Keywords and Phrases: bifurcation set, caustics,
principal curvature, parabolic curve.}

\footnote[0]{
KS is partly supported by the
JSPS KAKENHI Grant numbered 18K03301
and the Japan-Brazil bilateral project JPJSBP1 20190103.
SS is partly supported by the grant 2019/10156-4, 
S\~ao Paulo Research Foundation (FAPESP).}
\newcommand{\toukouchange}[2]{\ifx\TOUKOU\undefined{#1}\else{#2}\fi}%
\newcommand{\vect}[1]{\boldsymbol{#1}}
\newcommand{\R}{\boldsymbol{R}}
\newcommand{\N}{\boldsymbol{N}}
\newcommand{\Z}{\boldsymbol{Z}}
\newcommand{\M}{\mathcal{M}}
\newcommand{\D}{\mathcal{D}}
\newcommand{\trace}{\operatorname{trace}}
\newcommand{\rank}{\operatorname{rank}}
\newcommand{\Hess}{\operatorname{Hess}}
\newcommand{\hess}{\operatorname{Hess}}
\newcommand{\image}{\operatorname{Im}}
\newcommand{\coef}{\operatorname{coef}}
\newcommand{\trans}[1]{{\vphantom{#1}}^t{\!#1}}
\renewcommand{\phi}{\varphi}
\newcommand{\sgn}{\operatorname{sgn}}
\newcommand{\inner}[2]{\left\langle{#1},{#2}\right\rangle}
\newcommand{\spann}[1]{\left\langle{#1}\right\rangle}
\newcommand{\Ker}{\operatorname{Ker}}
\newcommand{\corank}{\operatorname{corank}}
\newcommand{\ep}{\varepsilon}
\newcommand{\zv}{\vect{0}}
\newcommand{\x}{\vect{x}}
\renewcommand{\u}{\vect{u}}
\newcommand{\RR}{{\cal R}}
\newcommand{\B}{{\cal B}}
\newcommand{\C}{{\cal C}}
\newcommand{\pmt}[1]{{\begin{pmatrix} #1  \end{pmatrix}}}
\newcommand{\config}[4]{\{#1\,-\sqrt{3}\,#2\,0\,#3\,\sqrt{3}\,#4\}}
\allowdisplaybreaks[4]

\maketitle

\begin{abstract}
We study the geometry  of bifurcation sets of generic unfoldings
of $D_4^\pm$-functions.
Taking blow-ups, 
we show each of the bifurcation sets of $D_4^\pm$-functions
admit a parametrization as a surface in $\R^3$.
Using this parametrization, we investigate the behavior of
the Gaussian curvature and the principal curvatures. 
Furthermore, we investigate
the number of ridge curves and subparabolic curves
near their singular point.
\toukouchange{}{
\keywords{bifurcation set \and caustics \and principal curvature \and parabolic curve}
\subclass{53A05 \and 58K05}}
\end{abstract}
\toukouchange{
\footnote[0]{2020 Mathematics Subject classification. 
Primary 53A05; 
Secondary 58K05. 
}
\footnote[0]{Keywords and Phrases: bifurcation set, caustics,
principal curvature, parabolic curve.}
}{}

\section{Introduction}
In recent decades, the differential geometry of fronts (wave fronts)
has been studied by many authors. In the Euclidean space,
the set consists of the collection of singular values
of a front and its parallel surfaces
is called the {\it caustic}.
Front and caustics are both fundamental objects in Lagrangian and
Legendrian singularity theory, and they are closely related (see 
\cite{AGV,izugra,irrt-book,zakall}, and also \cite{izutak,kruycaus}).
A front is a projection of the wave front set of an unfolding
of a function, and a caustic is the bifurcation set of an
unfolding of a function.
Although sometimes a singular point of a bifurcation set is
a singular point of a front,
the bifurcation sets of the versal unfolding of $D_4^\pm$-functions 
do not appear as fronts.
In this case, a parametrization of the bifurcation set
has not been given in the literature to the best knowledge of the authors.
In this paper, to see the geometry of the bifurcation set,
we simplify a versal unfolding $(\R^2\times\R^3,0)\to(\R,0)$ of a 
function $(\R^2,0)\to(\R,0)$
which is $\RR$-equivalent to the $D_4^\pm$-function
$$
f(u,v)=u^3/3!\pm uv^2/2,
$$
by using a coordinate change on $\R^2$ and 
an isometry on $\R^3$.
By using the blow-up method,
we give a parametrization 
of a generic versal unfolding of such a function,
and we show that
the parametrization 
is a front, and investigate 
its geometry.
For fronts, one can define classical differential
geometric invariants (Gaussian, mean and principal curvatures)
even though they diverge on the set of singular points.
We show:
\begin{introtheorem}\label{thm:bddunbdd}
One of the two principal curvatures of
the parametrization of the bifurcation set
of a versal unfolding of a\/ $D_4^\pm$-singularity\/
$C^\infty$-extends across the set of
singular points,
and the
other is unbounded near the singular point of the bifurcation set.
Moreover, the principal directions of these
principal curvatures\/
$C^\infty$-extend across the set of
singular points.
\end{introtheorem}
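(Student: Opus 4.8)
The plan is to work directly with the explicit front parametrization $\psi$ and its $C^\infty$ unit normal $\nu$ produced by the blow-up, using the fact (established earlier) that $\psi$ is a front: both $\psi$ and $\nu$ are $C^\infty$ across the singular set $S$, even though $\psi$ fails to be an immersion along $S$. Consequently the coefficients
$$
E=\inner{\psi_u}{\psi_u},\quad F=\inner{\psi_u}{\psi_v},\quad G=\inner{\psi_v}{\psi_v},\qquad
L=\inner{\psi_{uu}}{\nu},\quad M=\inner{\psi_{uv}}{\nu},\quad N=\inner{\psi_{vv}}{\nu}
$$
are all $C^\infty$ in the blow-up coordinates. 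The principal curvatures $\kappa$ are the roots of
$$
A\kappa^2-B\kappa+C=0,\qquad A=EG-F^2,\quad B=EN-2FM+GL,\quad C=LN-M^2,
$$
and, since $A=|\psi_u\times\psi_v|^2$ vanishes exactly where $d\psi$ drops rank, the singular set is $S=\{A=0\}$.

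First I would pin down $S$ in the blow-up chart (it will be a coordinate curve) and verify the non-degeneracy $B\neq0$ along $S$; this is the step that must be done by hand from the explicit normal form obtained in the reduction, treating the $D_4^+$ and $D_4^-$ cases separately. Flipping $\nu\mapsto-\nu$ changes $B\mapsto-B$ while fixing $C$ and $A$, so on each component of $S$ we may normalize $\nu$ so that $B>0$. Granting this, I rationalize the quadratic formula and write the two roots as
$$
\kappa_-=\frac{2C}{B+\sqrt{B^2-4AC}},\qquad \kappa_+=\frac{B+\sqrt{B^2-4AC}}{2A}.
$$
Near $S$ one has $B^2-4AC\to B^2>0$, so $\sqrt{B^2-4AC}$ is $C^\infty$ and the denominator $B+\sqrt{B^2-4AC}$ stays bounded away from $0$; hence $\kappa_-$ is $C^\infty$ and extends smoothly across $S$. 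The numerator of $\kappa_+$ tends to $2B\neq0$ while $A\to0$, so $\kappa_+$ is unbounded near each singular point. This yields the first two assertions.

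For the principal directions I would avoid the Weingarten map $I^{-1}II$, which is undefined on $S$, and instead use the projective criterion coming from the standard $3\times3$ determinant:
$$
(EM-FL)\,du^2+(EN-GL)\,du\,dv+(FN-GM)\,dv^2=0.
$$
Its coefficients $P=EM-FL$, $Q=EN-GL$, $R=FN-GM$ are $C^\infty$, and the two principal directions are the roots $[du:dv]\in\R P^1$ of $P\lambda^2+Q\lambda+R=0$. Because $\kappa_-$ is finite while $\kappa_+$ diverges on $S$, the two principal curvatures are distinct there, so the point is non-umbilic and $(P,Q,R)\neq(0,0,0)$ with discriminant $Q^2-4PR>0$; the two root directions therefore stay distinct and extend $C^\infty$ across $S$ (working in $\R P^1$ to accommodate the locus where $P$ vanishes and one direction becomes $[1:0]$). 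I would match the smooth branch carrying $\kappa_-$ against $\Ker(II-\kappa_- I)$ to identify which direction pairs with the bounded curvature.

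The hard part will be the non-degeneracy verifications --- $B\neq0$ on $S$ and the positivity of $Q^2-4PR$ near $S$ --- since these depend on the precise blow-up normal form and must be confirmed in both the $D_4^+$ and $D_4^-$ cases; once they are in place, the smooth/unbounded dichotomy and the smoothness of the principal directions follow formally from the rationalized formulas above. I would also record the loci where the direction quadratic degenerates (umbilic-type points away from $S$), since those are precisely the points that the subsequent analysis of ridge and subparabolic curves must treat separately.
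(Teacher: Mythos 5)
Your strategy hinges on the non-degeneracy claim ``$B\neq 0$ along $S$'' (and, for the directions, ``$(P,Q,R)\neq(0,0,0)$ with $Q^2-4PR>0$ on $S$''), and this claim is not merely hard to verify --- it is identically false, for this surface and indeed for every front at its singular locus. At a singular point of the parametrization $b(\theta,z)$ produced by the blow-up one has $b_\theta=0$ (by \eqref{eq:impld4150}, $b_\theta$ carries the factor $z^2\lambda(\theta)$, and $S(b)=\{z^2\lambda=0\}$), so the null direction of $db$ lies in the kernel of \emph{both} fundamental forms: $E=\inner{b_\theta}{b_\theta}$, $F=\inner{b_\theta}{b_z}$, $L=-\inner{b_\theta}{\nu_\theta}$ and $M=-\inner{b_\theta}{\nu_z}$ all vanish there. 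Consequently
$$
A=EG-F^2=0,\qquad B=EN-2FM+GL=0,\qquad C=LN-M^2=0
$$
at every point of $S$, and likewise $EM-FL=EN-GL=FN-GM=0$. Your rationalized root $\kappa_-=2C/\bigl(B+\sqrt{B^2-4AC}\bigr)$ is therefore an indeterminate form $0/0$ on $S$, the claimed unboundedness of $\kappa_+$ (``numerator tends to $2B\neq0$'') evaporates because $B\to0$ as well, and the projective direction quadratic degenerates to $0=0$ exactly where you need it. None of the conclusions follow from smoothness of $E,F,G,L,M,N$ alone.

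What is genuinely needed --- and what the paper's proof supplies --- is the precise \emph{order} of vanishing of each coefficient along $S$, with non-vanishing reduced leading terms. This is Lemma \ref{eq:asymbeh}: $E=z^4\lambda^2a^2(E_0+\cdots)$, $F=z^3\lambda a(F_0+\cdots)$, $L=z^2\lambda a|\tilde\nu|^{-1}(L_0+\cdots)$, $M=z^2\lambda a|\tilde\nu|^{-1}(M_0+\cdots)$, with $E_0>0$ and $L_0$ a non-zero constant. After dividing out the identifier of singularities $z^2\lambda$ to the correct powers (this is the content of \eqref{eq:gaussmeanset} and \eqref{eq:tildehki}), the reduced quantity $\tilde H$ satisfies $\tilde H(\theta,0)=L_0\neq0$, which is the correct surrogate for your ``$B\neq0$''; the bounded/unbounded dichotomy then comes from an exact cancellation in $H\mp\sqrt{H^2-K}$, obtained by expanding $\sqrt{\tilde H^2-4z^2\lambda a\tilde I\tilde K}=|\tilde H|-z^2\lambda\tilde h$ in the variable $x=z^2\lambda$, and the smooth principal directions in \eqref{eq:asymprindir} are likewise obtained only after rescaling by $z^2\lambda a$. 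So your skeleton (rationalize the quadratic, treat directions projectively) can be made to work, but only after replacing $A,B,C$ by their reductions modulo powers of $z^2\lambda$; establishing that factorization is the actual mathematical content of the theorem, and it is the step your proposal leaves out.
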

By using the asymptotic behavior of the Gaussian curvature,
we obtain the behavior of {\it parabolic curves\/} (the curves 
consisting of Gaussian curvature zero points) in
Theorems \ref{thm:numberpara} and \ref{thm:numberpara2}.
Moreover, by Theorem \ref{thm:bddunbdd}, we can discuss the conditions
for ridges and subparabolic curves near the singular point.
Let $g$ be an umbilic free regular surface,
and let $\kappa_i$ $(i=1,2)$ be the principal curvatures,
and $V_i$ the principal vector fields corresponding to $\kappa_i$.
A point $p$ on $g$ is
called a {\it ridge point with respect to\/ $\kappa_i$\/} if 
$V_i\kappa_i(p)=0$.
A point $p$ on $g$ is
called a {\it subparabolic point with respect to\/ $\kappa_i$\/} if 
$V_{j}\kappa_i(p)=0$, where $j=1,2$ and $j\ne i$.
A curve on $g$ is called a {\it ridge curve\/} (respectively,
{\it subparabolic curve\/}) if it consists of ridge points
(respectively, subparabolic points).
We show the following theorem
under the assumption that
the set of ridge points and the set of 
subparabolic points are curves.
See Section \ref{sec:numproof} and Proposition \ref{lem:ridges}
for the concrete conditions.
\begin{introtheorem}\label{lem:ridgenum}
We assume that the set of ridge points and the set of 
subparabolic points are curves.
Then the number of ridge curves with respect to the
unbounded principal curvature
emanating from the singular point
is at most\/ 
$18;$
the number of ridge curves with respect to the
bounded principal curvature
emanating from the singular point
is at most\/ 
$18;$
there are no subparabolic points with respect to the
unbounded principal curvature near
the singular point\/{\rm ;}
the number of subparabolic curves with respect to the
bounded principal curvature
emanating from the singular point
is at most\/ 
$10$.
\end{introtheorem}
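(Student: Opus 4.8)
The plan is to reduce each of the four counting problems to counting the zeros of an explicit trigonometric polynomial on the exceptional circle produced by the blow-up. Write the blow-up parametrization of the bifurcation set as $g(r,\theta)$ with $\theta\in\R/2\pi\Z$, so that the exceptional set $E=\{r=0\}$ is collapsed by $g$ to the $D_4$-singular point $0\in\R^3$; then a ridge (resp.\ subparabolic) curve emanating from the singular point corresponds to a branch of the relevant zero set in the $(r,\theta)$-domain whose closure meets $E$. By Theorem \ref{thm:bddunbdd} the bounded principal curvature $\kappa_b$ and both principal vector fields $V_b,V_u$ extend $C^\infty$ across $E$, while the unbounded curvature has the form $\kappa_u=\hat c/r^{m}$ with $\hat c$ smooth and $\hat c|_{E}$ nowhere zero. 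I would first read off, from the computation behind Theorem \ref{thm:bddunbdd}, that the unbounded direction $V_u$ is tangent to $E$ and the bounded direction $V_b$ is transverse to it; this dichotomy is exactly what separates the four cases, and (as the next step shows) the stated nonexistence of subparabolic points with respect to $\kappa_u$ forces the transversality of $V_b$.

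Next I would compute the four defining functions and clear the poles. The functions $V_b\kappa_b$ and $V_u\kappa_b$ are $C^\infty$ by Theorem \ref{thm:bddunbdd}, so the ridge set and the subparabolic set with respect to $\kappa_b$ are the zero sets of smooth functions $R_b$ and $S_b$. For the unbounded curvature, write $\kappa_u=\hat c/r^{m}$ in coordinates. Since $V_u$ is tangent to $E$, its $\partial_r$-component vanishes on $E$, the leading pole cancels in $V_u\kappa_u$, and $\widehat R_u:=r^{m}\,V_u\kappa_u$ is smooth; since $V_b$ is transverse to $E$ no such cancellation occurs and $\widehat S_u:=r^{m+1}\,V_b\kappa_u$ is smooth with $\widehat S_u|_{E}=-m\,\beta_0\,\hat c|_E$, where $\beta_0$ denotes the (nonvanishing) $\partial_r$-component of $V_b$ along $E$. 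As $\hat c|_E$ is nowhere zero, $\widehat S_u|_E$ is nowhere zero, which already establishes that there are no subparabolic points with respect to $\kappa_u$ near the singular point.

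It then remains to count the branches meeting $E$. Under the standing hypothesis that the ridge and subparabolic sets are curves — the precise nondegeneracy being Proposition \ref{lem:ridges} — each branch emanating from the singular point meets $E$ at a point where the relevant smooth function restricts to a zero on $E$, so the number of such branches is bounded by the number of zeros in $\theta\in[0,2\pi)$ of the restrictions $R_b|_E$, $\widehat R_u|_E$ and $S_b|_E$. A direct computation shows — and this is the computational heart of the proof — that $R_b|_E$ and $\widehat R_u|_E$ are trigonometric polynomials of degree $9$, while $S_b|_E$ is a trigonometric polynomial of degree $5$. Since a nonzero trigonometric polynomial of degree $n$ has at most $2n$ zeros in a period, this yields the bounds $2\cdot 9=18$ for the two ridge counts and $2\cdot 5=10$ for the subparabolic count with respect to $\kappa_b$, completing the proof.

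The main obstacle is the explicit computation of the three restrictions and the verification of their degrees. This requires carrying the blow-up parametrization through the first and second fundamental forms, extracting the exact pole order $m$ and the leading coefficient $\hat c$ from the proof of Theorem \ref{thm:bddunbdd}, and then differentiating $\kappa_b$ and $\kappa_u$ along $V_b$ and $V_u$ while tracking the order of vanishing in $r$ so that the correct clearing factors $r^{m}$ and $r^{m+1}$ are used. Determining the trigonometric degrees $9$ and $5$ of the resulting restrictions, and handling $D_4^+$ and $D_4^-$ separately since the signs entering the parametrization differ, is where the bookkeeping is heaviest; by contrast the nonvanishing of $\widehat S_u|_E$ is immediate once the transversality of $V_b$ to $E$ has been recorded.
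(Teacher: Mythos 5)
Your overall strategy is the same as the paper's: blow up, use the asymptotic expansions behind Theorem \ref{thm:bddunbdd} to clear poles from the functions $V_i\kappa_j$, restrict to the exceptional set, and bound the number of emanating ridge and subparabolic curves by the number of zeros of these restrictions. This is exactly Proposition \ref{lem:ridges} followed by the degree count of $C_1,C_2,C_3$ in Section \ref{sec:numproof}; your claimed degrees $9,9,5$, and your derivation of the nonexistence statement from the transversality of the bounded principal direction to $\{z=0\}$, also match the paper. However, two of your steps fail as stated. The first concerns $D_4^+$. You posit a parametrization with $\theta\in\R/2\pi\Z$ and count zeros by ``a degree-$n$ trigonometric polynomial has at most $2n$ zeros in a period''. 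That device applies only to $D_4^-$. For $D_4^+$ the blow-up is hyperbolic (Section \ref{sec:d4pdisc}): the exceptional parameter runs over all of $\R$ in each of the two domains ${\cal D}_{1},{\cal D}_{-1}$, and the restricted conditions are polynomials in $\cosh\theta$ and $\sinh\theta$, which are not periodic, so periodicity-based counting is vacuous there. This is not mere ``bookkeeping'': it is the reason the paper introduces adapted pairs of hyperbolic-trigonometric polynomials and proves Lemma \ref{lem:trinumber}\,(2), by which, after the substitution $t=\tanh\theta\in(-1,1)$, the conditions coming from the two sheets $\ep_2=\pm1$ become one polynomial of degree $n$ in $t$ and the corresponding polynomial in $1/t$, so that the number of zeros summed over both sheets is at most $n$. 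Your proposal has no replacement for this lemma, and indeed never accounts for the two-sheet structure $\ep_2=\pm1$ that enters the paper's final count for both signs.

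The second gap is your pole normal form $\kappa_u=\hat c/r^m$ with $\hat c$ smooth and $\hat c|_E$ nowhere zero. By \eqref{eq:kappa1} the unbounded curvature is $\kappa_1=(L_0+zO(0))/\big(z^2\lambda(\theta)\,a|\tilde\nu|\tilde I\big)$, where $\lambda$ is $\sin3\theta$, $\cos3\theta$, $\sinh3\theta$ or $\cosh3\theta$: besides the pole in $z$ there is an angular factor $\lambda$ vanishing exactly at the cuspidal-edge directions that cross the exceptional set. Consequently $r^mV_u\kappa_u$ is \emph{not} smooth: since $V_u$ is tangent to $E$ (by \eqref{eq:asymprindir}), the tangential derivative produces $\lambda'/\lambda^2$, and one must clear $z^2\lambda^2$ rather than a power of $z$ alone. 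The factors $\lambda,\lambda'$ (degree $3$) and $a_z$ (degree $4$) that then appear in the restricted condition --- the paper's $C_1=a(2\lambda'E_0+3\lambda E_0')+2\lambda E_0a_z$ --- are exactly what produce the degree $9$, so with your form of $\kappa_u$ the degree bookkeeping you assert cannot even be set up. (The omission happens to be harmless for the fourth assertion: for subparabolic points with respect to $\kappa_u$ the dominant term of $V_b\kappa_u$ is a nonzero multiple of $1/(z^3\lambda)$, which is nonvanishing away from $\lambda=0$, and on $\lambda=0$ the surface is singular anyway, so your nonexistence argument survives.) Finally, the degrees $9,9,5$, together with the parity properties of $E_0,E_1,M_0,N_0,N_1,a_z,\lambda$ needed to apply the counting lemma sheet by sheet, constitute essentially the entire content of the paper's proof; your proposal defers precisely this computation, so even in the $D_4^-$ case it is a plan rather than a proof.
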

\section{Preliminaries}
\subsection{Unfoldings and bifurcation sets}
Let $f:(\R^m,0)\to\R$ be a function.
A function $F:(\R^{m}\times\R^r,0)\to\R$ is called
an {\it unfolding\/} of $f$ if
$F(\u,0)=f(\u)$.
The {\it catastrophe set\/} $C_F$ of the unfolding $F$ of $f$ 
is
$$
C_F
=
\left\{(\u,\x)\in (\R^{m}\times\R^r,0)\,\left|\,
\dfrac{\partial F}{\partial u_1}(\u,\x)=\cdots=
\dfrac{\partial F}{\partial u_m}(\u,\x)=0\right.\right\},
$$
where $\u=(u_1,\ldots,u_m)$.
An unfolding $F:(\R^{m}\times\R^r,0)\to\R$ of 
$f:(\R^m,0)\to\R$ is a {\it Morse family of\/} $f$ if
$0\in \R^m$ is a critical point of $f$ and
\begin{equation}\label{eq:morse}
\rank\pmt{
\dfrac{\partial^2 F}{\partial u_1^2}&
\cdots&
\dfrac{\partial^2 F}{\partial u_1\partial u_m}&
\dfrac{\partial^2 F}{\partial u_1\partial x_1}&
\cdots&
\dfrac{\partial^2 F}{\partial u_1\partial x_n}\\
\vdots&\ddots&\vdots&\vdots&\ddots&\vdots\\
\dfrac{\partial^2 F}{\partial u_1\partial u_m}&
\cdots&
\dfrac{\partial^2 F}{\partial u_m^2}&
\dfrac{\partial^2 F}{\partial u_m\partial x_1}&
\cdots&
\dfrac{\partial^2 F}{\partial u_m\partial x_n}}
=m,
\end{equation}
at $0$ holds, where $\x=(x_1,\ldots,x_r)$.
By the implicit function theorem, if $F$ is a Morse family,
then $C_F$ is an $r$-dimensional submanifold of $(\R^{m}\times\R^r,0)$.
We set its parametrization
$B_1:C_F\to \R^{m}\times\R^3$ as an inclusion.
Let $\pi:\R^{m}\times\R^3\to\R^3$ be the projection,
and set $B_F=\pi\circ B_1$.
The singular set of $B_F$ is 
$
S(B_F)=\{(\u,\x)\in C_F\,|\,
\rank H_F(\u,\x)<m\}$, where
$$
H_F=\left(\dfrac{\partial^2 F}{\partial u_i\partial u_j}\right)
_{1\leq i,j\leq m}.
$$
The image $B_F(S(B_F))$ is called the {\it bifurcation set}, and
denoted by $\B_F$:
$$
\B_F=
\{\x\in\R^3\,|\,
\text{there exists }(\u,\x)\in C_F\text{ such that }
\rank H_F(\u,\x)<m\}.
$$
The following $P$-${\cal R}^+$ equivalence plays a fundamental role
in investigating bifurcation sets 
(see \cite[Chapter 8]{AGV},\cite[Chapter 5]{irrt-book} for details).
\begin{definition}
Let $F_i:(\R^{m}\times\R^r,0)\to \R$ be Morse families of 
functions $f_i:(\R^{m},0)\to \R$ $(i=1,2)$.
They are said to be {\it $P$-${\cal R}^+$ equivalent\/} if
there exists a triple $(g(\u,\x),G(\x),h(\x))$, where 
$g:(\R^m\times\R^r,0)\to (\R^m,0)$,
$G:(\R^r,0)\to(\R^r,0)$ is a diffeomorphism-germ,
and
$h:(\R^r,0)\to(\R,0)$ such that
\begin{equation}\label{eq:rplus}
F_2(\u,\x)=F_1(\bar G(\u,\x))+h(\x),\quad
(\bar G(\u,\x)=(g(\u,\x),G(\x))).
\end{equation}
\end{definition}
The following lemma is well-known 
(see \cite[ Proposition 3.1]{irrt-book} and its proof):
\begin{lemma}
Let\/ $F_i:(\R^{m}\times\R^r,0)\to \R$ be Morse families of\/
$f_i:(\R^{m},0)\to \R$ $(i=1,2)$.
If\/ $F_1,F_2$ are\/ $P$-$\RR^+$-equivalent as in\/
\eqref{eq:rplus},
then\/ 
$G(\B_{F_1})=\B_{F_2}$ as set germs at\/ $0$.
\end{lemma}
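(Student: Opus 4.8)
The plan is to reduce the statement to a pointwise fact about the critical points of the functions $\u\mapsto F_i(\u,\x)$ with the parameter $\x$ held fixed. First I would rewrite the bifurcation set in a more convenient form: unwinding the definitions of $C_F$, $S(B_F)$ and $\B_F$ given above, one sees that $\x\in\B_F$ if and only if the function $F(\cdot,\x)$ has a \emph{degenerate} critical point, i.e. there is a $\u_0$ with $\partial F/\partial u_1(\u_0,\x)=\cdots=\partial F/\partial u_m(\u_0,\x)=0$ and $\rank H_F(\u_0,\x)<m$. Thus it suffices to compare the degenerate critical points of $F_2(\cdot,\x)$ with those of $F_1(\cdot,G(\x))$, for each fixed $\x$.

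Next I would extract from \eqref{eq:rplus} the relevant structure of $\bar G$. Since $\bar G(\u,\x)=(g(\u,\x),G(\x))$ is a diffeomorphism-germ, its Jacobian is block triangular with diagonal blocks $\partial g/\partial\u$ and $DG$; invertibility of this Jacobian, together with the fact that $G$ is a diffeomorphism, forces $\partial g/\partial\u$ to be nonsingular at $0$. Hence, after shrinking the representative, for each fixed $\x$ the map $\psi_\x:=g(\cdot,\x)$ is a diffeomorphism-germ on $(\R^m,\cdot)$. Note also that the summand $h(\x)$ in \eqref{eq:rplus} is independent of $\u$, so it contributes nothing to $\u$-derivatives.

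Now fix $\x$ and set $\y=G(\x)$, so that $F_2(\u,\x)=F_1(\psi_\x(\u),\y)+h(\x)$. By the chain rule and the invertibility of $D\psi_\x$, a point $\u_0$ is a critical point of $F_2(\cdot,\x)$ if and only if $\psi_\x(\u_0)$ is a critical point of $F_1(\cdot,\y)$. At such a critical point the terms involving the first derivatives of $F_1$ vanish in the second-derivative expansion, so one obtains the congruence
\[
H_{F_2}(\u_0,\x)=\trans{(D\psi_\x(\u_0))}\,H_{F_1}(\psi_\x(\u_0),\y)\,D\psi_\x(\u_0).
\]
Because $D\psi_\x(\u_0)$ is invertible, this yields $\rank H_{F_2}(\u_0,\x)=\rank H_{F_1}(\psi_\x(\u_0),\y)$, and hence $F_2(\cdot,\x)$ has a degenerate critical point exactly when $F_1(\cdot,G(\x))$ does.

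Combining the two reductions gives $\x\in\B_{F_2}\iff G(\x)\in\B_{F_1}$, so that $\B_{F_1}$ and $\B_{F_2}$ correspond to one another under the diffeomorphism-germ $G$ as set-germs at $0$, which is the assertion of the lemma. I expect the only delicate points to be purely bookkeeping: verifying that $\psi_\x$ is a diffeomorphism for each fixed $\x$ from the block structure of $D\bar G$, checking that the first-derivative correction term in the Hessian expansion genuinely drops out at a critical point, and fixing the direction of $G$ (which is governed by the conventions in \eqref{eq:rplus} and, if desired, by the symmetry of $P$-$\RR^+$-equivalence). None of these presents a real difficulty, so the main care lies in translating cleanly between the rank formulation of $\B_F$ and the degenerate-critical-point formulation.
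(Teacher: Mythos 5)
Your proposal is correct and is essentially the argument the paper implicitly relies on: the paper offers no proof of this lemma, deferring to \cite[Proposition 3.1]{irrt-book}, whose proof is exactly your computation — fiberwise chain rule to match critical points, plus the congruence $H_{F_2}=\trans{(D\psi_\x)}H_{F_1}D\psi_\x$ at critical points to match degeneracy, so that the fibered diffeomorphism $\bar G$ carries $S(B_{F_2})$ onto $S(B_{F_1})$. Your closing remark about the direction of $G$ is also apt: with the convention of \eqref{eq:rplus} the argument literally yields $\B_{F_2}=G^{-1}(\B_{F_1})$, i.e.\ $G(\B_{F_2})=\B_{F_1}$, and the statement as printed (with $G(\B_{F_1})=\B_{F_2}$) is recovered by the symmetry of $P$-$\RR^+$-equivalence, exactly as you indicate.
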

By this lemma, to investigate the geometry of bifurcation sets, 
with respect to Euclidean geometry in $\R^3$, 
we introduce the following $P$-$\RR^+$-isometricity:
\begin{definition}
Let $F_i:(\R^{m}\times\R^r,0)\to \R$ be Morse families of 
functions $f_i:(\R^{m},0)\to \R$ $(i=1,2)$.
They are said to be {\it $P$-${\cal R}^+$-isometric\/} if
they are $P$-${\cal R}^+$-equivalent, and the diffeomorphism-germ
$G:(\R^r,0)\to(\R^r,0)$ in the triple 
$(g(\u,\x),$ $G(\x),$ $h(\x))$ which gives $P$-${\cal R}^+$-equivalence
is an isometry-germ.
\end{definition}

We may simplify the functions $f$ and $F$ by $P$-$\RR^+$-isometry.
Two functions $f_i:(\R^{m},0)\to \R$ $(i=1,2)$
are {\it $\RR$-equivalent\/} if there exists a diffeomorphism-germ
$\phi:(\R^{m},0)\to(\R^{m},0)$ such that $f_1=f_2\circ \phi$.
A function-germ $f$ at $0$ of two variables is a
{\it $D_4^\pm$-germ\/}
if it is $\RR$-equivalent to $f(u,v)=u^3/3!\pm uv^2/2$,
where $(u_1,u_2)$ is denoted by $(u,v)$.
Let $\phi:(\R^{m},0)\to(\R^{m},0)$ be a diffeomorphism-germ.
Then 
a Morse family $F:(\R^{m}\times\R^r,0)\to \R$  of 
$f:(\R^{m},0)\to \R$ is $P$-$\RR^+$-equivalent to
the unfolding $F(\phi(\u),\x)$.
Since we study the geometry of the bifurcation set of
a $D_4^\pm$-germ $f$, under the $P$-${\cal R}^+$-isometricity,
we may assume
$f=u^3/3!\pm uv^2/2$ without loss of generality.

\subsection{Simplification of an unfolding by $P$-$\RR^+$-equivalence}
\begin{definition}
Let $F_i:(\R^{m}\times\R^{r_i},0)\to \R$ $(i=1,2)$
be two unfoldings of $f:(\R^m,0)\to\R$.
An {\it $\RR^+$-$f$-morphism from\/ $F_2$ to\/ $F_1$} is a triple 
$(g(\u,\x),$ $G(\x),$ $h(\x))$, where 
$g:(\R^m\times\R^{r_2},0)\to (\R^m,0)$,
$G:(\R^{r_2},0)\to(\R^{r_1},0)$,
$h:(\R^{r_2},0)\to(\R,0)$,
and they satisfy $g(\u,0)=\u$ and
\begin{equation}\label{eq:rfmorph}
F_2(\u,\x)=F_1(g(\u,\x),G(\x))+h(\x).
\end{equation}
\end{definition}
\begin{definition}
An unfolding $F_1:(\R^{m}\times\R^{r_1},0)\to \R$ of $f:(\R^m,0)\to\R$
is an $\RR^+$-{\it versal unfolding\/} if for any 
unfolding $F_2:(\R^{m}\times\R^{r_2},0)\to \R$ of $f$,
there exists an $\RR^+$-$f$-morphism from $F_2$ to $F_1$.
\end{definition}

It is known that the function
$F_{0,\ep_1}(u,v,x,y,z)=u^3/3!+\ep_1 uv^2/2+xu+yv+z(u^2-\ep_1v^2)/2$ 
is an\/ $\RR^+$-versal unfolding 
of\/ $u^3/3!+\ep_1 uv^2/2$, where\/ $\ep_1=\pm1$
(\cite[Chapter 8]{AGV},\cite[Chapter 5]{irrt-book}).
We call 
the bifurcation set of an $\RR^+$-versal unfolding of $u^3/3!\pm uv^2/2$
a {\it $D_4^\pm$-singularity}.
The bifurcation set of 
$F_{0,\ep_1}$ is the set
$$
B_{0,\ep_1}=\{(-u^2/2-\ep_1v^2/2-zu,-\ep_1uv+\ep_1zv,z)\,|\,
\ep_1(u^2-z^2)-v^2=0\}. 
$$
We can observe that $B_{0,1}$ consists of two sheets and
they have intersection curves
$c(t)=\{(-t,\pm t,0)\,|\,t>0\}$.
All $D_4^+$-singularities are locally diffeomorphic to
$B_{0,1}$, we call the corresponding curves to $c(t)$ the
{\it intersection curve}.
See Figure \ref{fig:biffig} for 
the bifurcation sets of $F_{0,\ep_1}$.
\begin{figure}[ht]
\centering
\includegraphics[width=.3\linewidth]{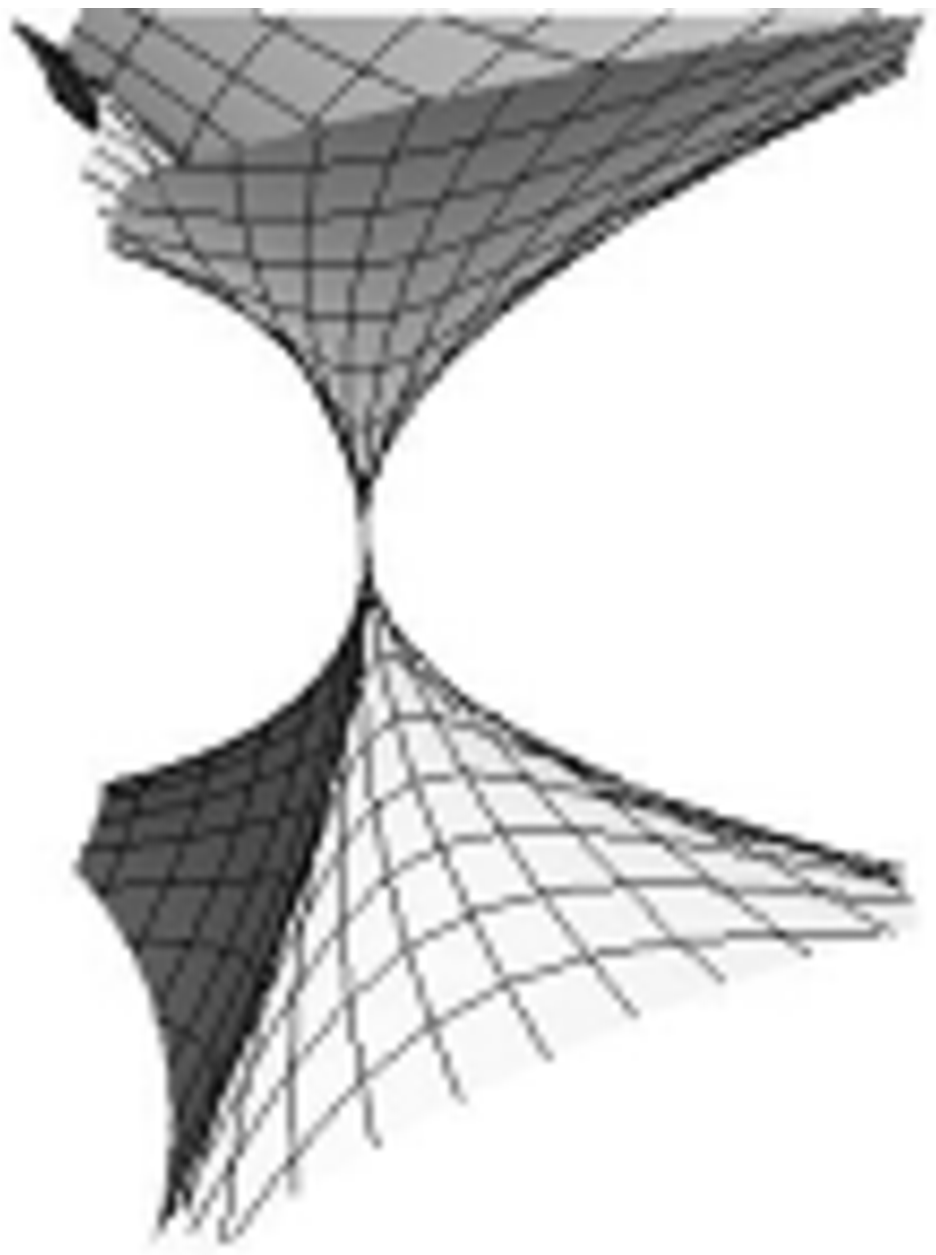}
\hspace{10mm}
\includegraphics[width=.3\linewidth]{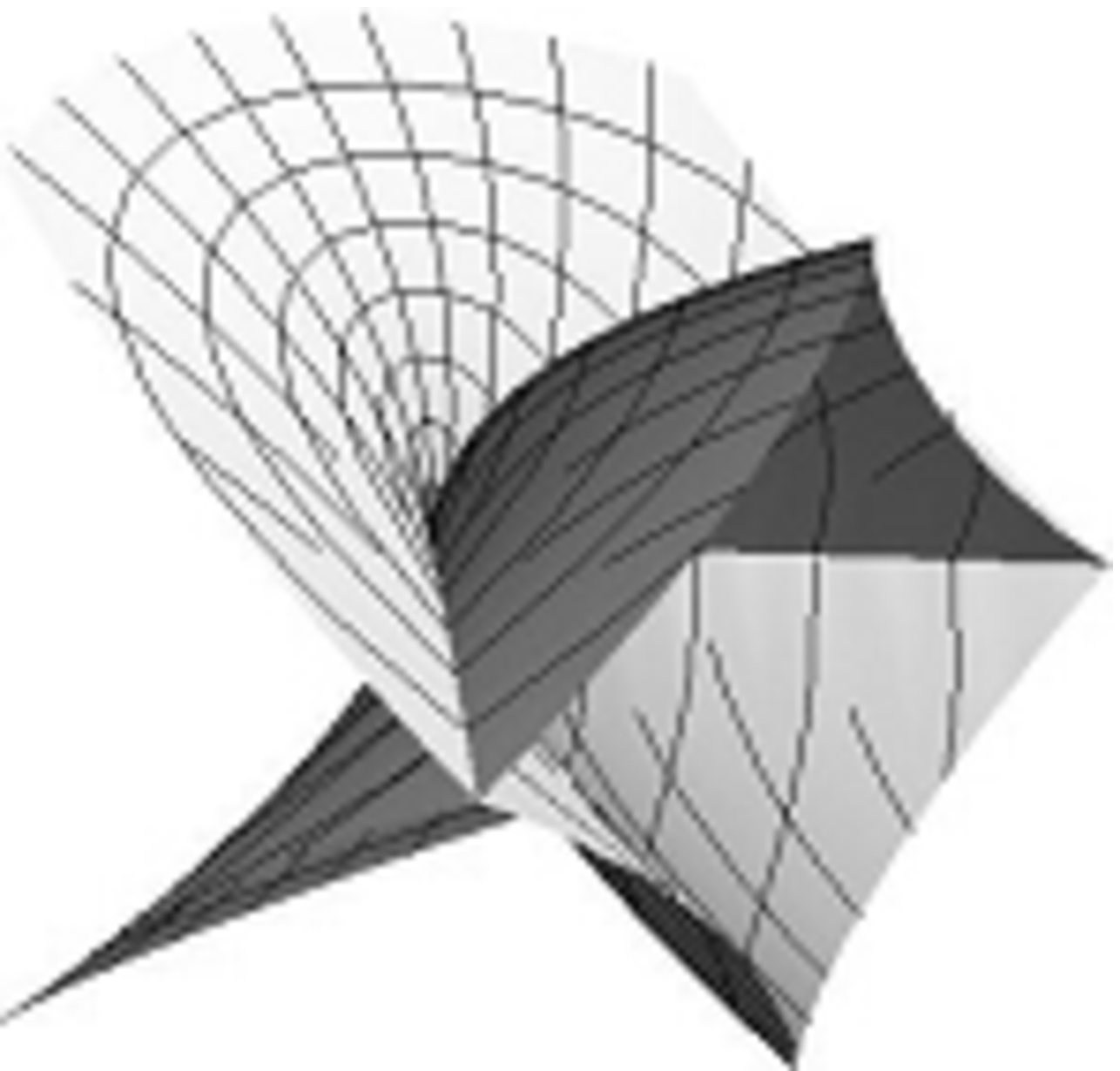}
\caption{The bifurcation sets of $F_{0,-1}$ and
$F_{0,1}$}
\label{fig:biffig}
\end{figure}

We remark that $\RR^+$-versal unfolding $(\R^2\times\R^3,0)\to(\R,0)$ 
of the $D_4^\pm$-germ
is unique (\cite[Chapter 8]{AGV},\cite[Chapter 5]{irrt-book}).

Let $f:(\R^2,0)\to\R$ be
$f(u,v)=u^3/3!+\ep_1 uv^2/2$ $(\ep_1=\pm1)$
and let $F:(\R^2\times\R^3,0)\to\R$ be an $\RR^+$-versal unfolding of $f$.
We have the following:
\begin{proposition}
The function $F$ is $P$-$\RR^+$-isometric to
\begin{equation}\label{eq:normal1}
F(\u,\x)=F_0(\u,G(\x))=
\dfrac{u^3}{3!}+\ep_1\dfrac{uv^2}{2}
+P(\x)u+Q(\x)v+R(\x)\dfrac{u^2-\ep_1v^2}{2}
\end{equation}
with the condition
\begin{equation}\label{eq:normal2}
g_{1,010}=g_{1,001}=g_{2,001}=0
\quad\text{and}\quad
g_{1,100},\ g_{2,010},\ g_{3,001}>0,
\end{equation}
where
\begin{equation}\label{eq:normal3}
G_n(\x)=\sum_{i+j+k\geq 1}\dfrac{g_{n,ijk}}{i!j!k!}x_1^ix_2^jx_3^k
\quad(G_1=P,\ G_2=Q,\ G_3=R,\ n=1,2,3).
\end{equation}
\end{proposition}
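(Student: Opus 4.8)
The plan is to exploit the uniqueness of the $\RR^+$-versal unfolding to compare $F$ with the standard model $F_0:=F_{0,\ep_1}$, and then to spend the remaining freedom---an isometry of the parameter space $\R^3$---on normalizing the linear part of the parameter change. Since $F$ and $F_0$ are both $\RR^+$-versal unfoldings of the same germ $f=u^3/3!+\ep_1uv^2/2$ with the same number of parameters, the uniqueness recalled above shows that they are $P$-$\RR^+$-equivalent; that is, there is a triple $(g(\u,\x),G_0(\x),h_0(\x))$ with $G_0\colon(\R^3,0)\to(\R^3,0)$ a diffeomorphism-germ and
\[
F(\u,\x)=F_0\bigl(g(\u,\x),G_0(\x)\bigr)+h_0(\x).
\]
In particular the linear part $M:=dG_0|_0$ lies in $GL(3,\R)$.

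First I would record what \eqref{eq:normal2} actually demands. Writing the linear part of $G=(G_1,G_2,G_3)$ as the $3\times3$ matrix whose $(n,k)$-entry is $\partial G_n/\partial x_k(0)$, the coefficients $g_{1,010},g_{1,001},g_{2,001}$ are precisely its $(1,2),(1,3),(2,3)$-entries, while $g_{1,100},g_{2,010},g_{3,001}$ are its diagonal entries. Thus \eqref{eq:normal2} is equivalent to the requirement that this matrix be lower triangular with positive diagonal entries. The task therefore reduces to absorbing $G_0$ into such a normal form by an orthogonal (hence isometric) change of coordinates in $\R^3$.

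The core step is a factorization of $M$. I would invoke the $LQ$-decomposition---equivalently, the $QR$-decomposition of $\trans M$---which writes any invertible real matrix uniquely as $M=NA$ with $N$ lower triangular with positive diagonal and $A\in O(3)$. Setting $\Phi(\x):=A\x$, an isometry-germ of $(\R^3,0)$ fixing the origin, and $G:=G_0\circ\Phi^{-1}$, the linear part of $G$ equals $M\,\trans A=N$, so $G$ satisfies \eqref{eq:normal2}. Substituting $G_0=G\circ\Phi$ into the displayed identity and putting $\phi:=g$ and $h:=h_0$ gives
\[
F(\u,\x)=F_0\bigl(\phi(\u,\x),\,G(\Phi(\x))\bigr)+h(\x),
\]
which is exactly the assertion that $F$ is $P$-$\RR^+$-isometric to $F_0(\u,G(\x))$, the right-hand side of \eqref{eq:normal1}. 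The higher-order coefficients of $G$ are unconstrained by \eqref{eq:normal2}, so no further adjustment is needed.

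The only delicate point---and the place where a naive attempt would fail---is the positivity of the three diagonal coefficients. Were we to insist on an orientation-preserving isometry ($A\in SO(3)$), then $\det N=\det M\det A=\det M$ would force $\det N<0$ whenever $\det M<0$, contradicting the positivity of the diagonal of a lower-triangular $N$. This is resolved by permitting reflections: reflections are bona fide Euclidean isometries, so $A\in O(3)$ is admissible in a $P$-$\RR^+$-isometry, and the $O(3)$-version of the factorization then yields all diagonal entries positive regardless of the sign of $\det M$. Beyond this observation the argument is routine linear algebra.
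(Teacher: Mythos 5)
Your proof is correct and takes essentially the same route as the paper: the paper also first obtains $F(\u,\x)=F_0(g(\u,\x),G(\x))+h(\x)$ from versality and then normalizes the linear part of $G$ by an orthogonal change of the parameter space, carried out there as an explicit Gram--Schmidt orthonormalization of the gradient vectors of $P,Q,R$ followed by a diagonal $\pm1$ matrix, which is precisely the $LQ$-factorization with factor in $O(3)$ that you invoke. The only difference is cosmetic: the paper separates the triangularization (Gram--Schmidt) from the positivity of the diagonal (sign flips), while you absorb both, including the possible reflection when $\det M<0$, into a single factorization.
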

\begin{proof}
Since $F_{0,\ep_1}$ is a versal unfolding,
there exist 
$g:(\R^2\times\R^3,0)\to (\R^2,0)$,
$G:(\R^3,0)\to(\R^3,0)$,
$h:(\R^3,0)\to(\R,0)$,
and they satisfy $g(\u,0)=\u$ and
$F(\u,\x)=F_0(g(\u,\x),G(\x))+h(\x)$.
By the triple $(g(\u,\x),\x,h(\x))$, 
we see $F$ is $P$-$\RR^+$-isometric to
$F_0(u,G(\x))$ as in \eqref{eq:normal1}.
Let us set
$$
\vect{g}_1=\pmt{g_{1,100}\\g_{1,010}\\g_{1,001}},\quad
\vect{g}_2=\pmt{g_{2,100}\\g_{2,010}\\g_{2,001}},\quad
\vect{g}_3=\pmt{g_{3,100}\\g_{3,010}\\g_{3,001}},
$$
and let $\overline{\vect{g}_1},\overline{\vect{g}_2},\overline{\vect{g}_3}$
be the Gram-Schmidt orthonormalized vectors, namely,
\begin{align*}
\overline{\vect{g}_1}
=&\dfrac{\vect{g}_1}{|\vect{g}_1|},\qquad
\widetilde{\vect{g}_2}
=\vect{g}_2-(\overline{\vect{g}_1}\cdot\vect{g}_2)\overline{\vect{g}_1},\qquad
\overline{\vect{g}_2}
=\dfrac{\widetilde{\vect{g}_2}}{|\widetilde{\vect{g}_2}|},\\
\widetilde{\vect{g}_3}
=&\vect{g}_3-
\Big((\overline{\vect{g}_1}\cdot\vect{g}_3)\overline{\vect{g}_1}
+(\overline{\vect{g}_2}\cdot\vect{g}_3)\overline{\vect{g}_2}\Big),\qquad
\overline{\vect{g}_3}
=\dfrac{\widetilde{\vect{g}_3}}{|\widetilde{\vect{g}_3}|}.
\end{align*}
We set 
$$
M=\pmt{
\trans{\overline{\vect{g}_1}}\\
\trans{\overline{\vect{g}_2}}\\
\trans{\overline{\vect{g}_3}}},
$$
where $\trans{(~)}$ stands for matrix transposition.
Then $M$ is an orthonormal matrix, and is identified with a
linear map.
By the triple $(\operatorname{id},M,0)$, we see the first
condition of
\eqref{eq:normal2} can be satisfied.
By the versality, 
$g_{1,100}g_{2,010}g_{3,001}\ne0$.
The second condition of \eqref{eq:normal2} can be satisfied 
by the linear map defined by the orthonormal matrix
$$
\pmt{
\pm1&0&0\\
0&\pm1&0\\
0&0&\pm1}.
$$
\end{proof}
Geometric meanings of coefficients of $G_1,G_2,G_3$ 
are discussed in Section \ref{sec:geombif}.

\subsection{Fronts}
Since we shall show the bifurcation sets are fronts,
we give a fundamental definition of fronts.
Let $f:(\R^2,0)\to(\R^3,0)$ be a map-germ.
The map $f$ is a {\it frontal\/} if there exists a
unit normal vector field $\nu$ along $f$ such that
$\inner{df_p(X)}{\nu(p)}=0$ for any $X\in T_p(\R^2,0)$.
A frontal $f$ with a unit normal vector $\nu$ is a 
{\it front\/} if the pair $(f,\nu)$ is an immersion.
Let $f$ be a frontal. 
A function $\lambda:(\R^2,0)\to\R$ is called an {\it identifier of singularities}
if it is a non-zero multiple of the function $\det(f_u,f_v,\nu)$.
If a function is an identifier of singularities, then
the set of singular points $S(f)$ satisfies $S(f)=\lambda^{-1}(0)$.
Since the unit normal vector field is well-defined for
frontals,
one can define the Gaussian, mean and principal curvatures
in a natural way. However, it may diverge on the set of singular points.
See \cite{ms,msuy,osetari,suyfro,teraprin} for 
differential geometric study of these curvatures.
Let $f:(\R^2,0)\to(\R^3,0)$ be a front with a unit normal vector field $\nu$,
which is a cuspidal edge
(by coordinate transformations on the source and the target space,
it can be written as $(u,v)\mapsto(u,v^2,v^3)$).
Let $\gamma:(J,0)\to (\R^3,0)$ be a parametrization of the set of singular points
of $f$, where $J$ is an open interval containing $0$.
We set $\hat\gamma(t)=f\circ \gamma(t)$ and $\hat\nu(t)=\nu\circ\gamma(t)$.
The {\it singular curvature\/} $\kappa_s$ 
and the ({\it limiting\/}) {\it normal curvature\/} $\kappa_n$
are defined by 
\begin{equation}\label{eq:kskn}
\kappa_s(t)=\pm\dfrac{\det(\hat\gamma',\hat\gamma'',\hat\nu)}{|\hat\gamma'|^3}(t),\quad
\kappa_n(t)=\dfrac{\hat\gamma''\cdot\hat\nu}{|\hat\gamma'|^2}(t).
\end{equation}
See \cite{suyfro} for details.
\section{Description of bifurcation sets}
We assume $f(u,v)=u^3/3!+\ep_1 uv^2/2$ and $F$ is
written as \eqref{eq:normal1} with the conditions
\eqref{eq:normal2}.
Then by the implicit function theorem,
there exist two functions $x(u,v,z),y(u,v,z)$ such that
$$F_u(u,v,x(u,v,z),y(u,v,z),z)=F_v(u,v,x(u,v,z),y(u,v,z),z)=0.$$
Thus $C_F$ can be parametrized by 
$(u,v,z)\mapsto (u,v,x(u,v,z),y(u,v,z),z)$,
and the map $B_F$ is $B_F(u,v,z)=(x(u,v,z),y(u,v,z),z)$.
Since
\begin{equation}\label{eq:dbf}
dB_F=\pmt{
x_u&x_v&x_z\\
y_u&y_v&y_z\\
0&0&1\\
},
\end{equation}
it holds that $S(B_F)=\{x_uy_v-x_vy_u=0\}$.
By the implicit function theorem,
\begin{equation}\label{eq:impld4}
\pmt{
F_{ux}&F_{uy}\\
F_{vx}&F_{vy}}
\pmt{
x_u&x_v\\
y_u&y_v}
=-
\pmt{
F_{uu}&F_{uv}\\
F_{uv}&F_{vv}}
\end{equation}
and by \eqref{eq:normal2},
the bifurcation set can be written as
\begin{equation}\label{eq:bifd4}
\B_F=\{(x,y,z)\,|\,\text{ there exists }(u,v)\in C_F\text{ such that }
\det H_F(u,v,x,y,z)=0\}.
\end{equation}
\subsection{$D_4^-$ singularity}\label{sec:d4mdisc}
We give a parametrization of the bifurcation set
by using the blow-up method at a singular point \cite{fukuihase}
(see also \cite[Example (a) in p. 221]{hironaka}).
Let $S^1=\R/2\pi\Z$ be a circle, and let $I=(-\ep,\ep)$ be
an open interval. Two points $(\theta_i,r_i)\in S^1\times I$ 
$(i=1,2)$ are equivalent $(\sim)$ if $(\theta_2,r_2)=(\theta_1+\pi,r_1)$.
The quotient space $\M=S^1\times I/\sim$ is topologically a M\"obius strip.
There is a natural map $\pi:\M\to \R^2$, 
where $\pi([(\theta,r)])=(r\cos\theta,r\sin\theta)$.
This is usually called a blow-up.
Furthermore, we take a double cover $\hat\M$ of $\M$,
and consider a natural map $\hat\pi:\hat\M\to \R^2$, 
where $\hat\pi([(\theta,r)])=(r\cos2\theta,r\sin2\theta)$.
Then $\hat\M$ is topologically an annulus.
We assume $\ep_1=-1$ in \eqref{eq:normal1}.
Then
$\det H_F=-u^2-v^2+R(x,y,z)^2$.
We set
\begin{equation}\label{eq:impld4040}
u=r\cos2\theta,\quad
v=r\sin2\theta,
\end{equation}
where $(\theta,r)\in \hat\M$.
Then $-r^2+R^2=0$ can be solved by
$r=\ep_2 R(x,y,z)$, where $\ep_2=\pm1$.
Then the equation for the bifurcation set is
\begin{equation}\label{eq:impld4050}
F_u(2\theta,r,x,y,z)=0,\ 
F_v(2\theta,r,x,y,z)=0,\ 
r=\ep_2 R(x,y,z).
\end{equation}
This is equivalent to
\begin{equation}\label{eq:impld4055}
X_{-1,\ep_2}(\theta,x,y,z)=0,\ Y_{-1,\ep_2}(\theta,x,y,z)=0,
\end{equation}
where 
\begin{equation}\label{eq:xyalphabeta1}
\begin{array}{rl}
X_{-1,\ep_2}(\theta,x,y,z)&=F_u(\ep_2 R(x,y,z),2\theta,x,y,z)\\
&=
P(x,y,z)
+R(x,y,z)^2\alpha_{-1,\ep_2}(\theta),\\
Y_{-1,\ep_2}(\theta,x,y,z)&=F_v(\ep_2 R(x,y,z),2\theta,x,y,z)\\
&=
Q(x,y,z)
+R(x,y,z)^2\beta_{-1,\ep_2}(\theta),\\
\alpha_{-1,\ep_2}(\theta)&=
\dfrac{\cos ^22\theta-\sin^22\theta }{2}
+\ep_2 \cos 2\theta,\\
\beta_{-1,\ep_2}(\theta)
&=
-\cos 2\theta \sin2\theta 
+\ep_2 \sin2\theta.
\end{array}
\end{equation}
By the condition \eqref{eq:normal2}, 
there exist functions $x_{-1,\ep_2}(\theta,z)$, $y_{-1,\ep_2}(\theta,z)$
such that 
$X_{-1,\ep_2}$ $(\theta,x_{-1,\ep_2}$ $(\theta,z),$ 
$y_{-1,\ep_2}(\theta,z),z)=
Y_{-1,\ep_2}$ $(\theta,x_{-1,\ep_2}(\theta,z),y_{-1,\ep_2}(\theta,z),z)$
$=0$
holds identically.
In this setting, we have:
\begin{lemma}\label{lem:xyg3}It holds that
\begin{equation}\label{eq:impld4535}
x_{-1,\ep_2}(\theta,0)=y_{-1,\ep_2}(\theta,0)=
(x_{-1,\ep_2})_z(\theta,0)=(y_{-1,\ep_2})_z(\theta,0)=0
\end{equation}
and
\begin{equation}\label{eq:impld4536}
R(x_{-1,\ep_2}(\theta,0),y_{-1,\ep_2}(\theta,0),0)=0.
\end{equation}
\end{lemma}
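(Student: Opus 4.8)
The plan is to apply the implicit function theorem to the system \eqref{eq:impld4055} and then to extract the stated vanishing directly from the normalization \eqref{eq:normal2}. First I would observe that, because each $G_n$ in \eqref{eq:normal3} has vanishing constant term, the functions $P,Q,R$ all vanish at the origin; consequently $X_{-1,\ep_2}(\theta,0,0,0)=Y_{-1,\ep_2}(\theta,0,0,0)=0$ for every $\theta$, so the constant section $(x,y)=(0,0)$ already solves the defining system when $z=0$. The Jacobian of $(X_{-1,\ep_2},Y_{-1,\ep_2})$ with respect to $(x,y)$ at the origin is
$$
\pmt{P_x&P_y\\ Q_x&Q_y}(0,0,0)=\pmt{g_{1,100}&0\\ g_{2,100}&g_{2,010}},
$$
since every term carrying a factor of $R$ drops out where $R=0$, and the off-diagonal entry $P_y(0,0,0)=g_{1,010}$ vanishes by \eqref{eq:normal2}. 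Its determinant $g_{1,100}g_{2,010}$ is positive, again by \eqref{eq:normal2}, so the implicit function theorem applies; by its uniqueness clause the solution branch $x_{-1,\ep_2},y_{-1,\ep_2}$ must coincide with the constant section along $z=0$, which yields $x_{-1,\ep_2}(\theta,0)=y_{-1,\ep_2}(\theta,0)=0$. Equation \eqref{eq:impld4536} then follows immediately from $R(0,0,0)=0$.

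For the first-order part of \eqref{eq:impld4535} I would differentiate the identity $X_{-1,\ep_2}(\theta,x_{-1,\ep_2}(\theta,z),y_{-1,\ep_2}(\theta,z),z)\equiv0$ and its $Y_{-1,\ep_2}$-counterpart in $z$. Writing $X=X_{-1,\ep_2}$, $Y=Y_{-1,\ep_2}$ and using subscripts for partial derivatives, this gives
$$
X_x\,(x_{-1,\ep_2})_z+X_y\,(y_{-1,\ep_2})_z+X_z=0,\qquad
Y_x\,(x_{-1,\ep_2})_z+Y_y\,(y_{-1,\ep_2})_z+Y_z=0.
$$
Evaluating at $z=0$, where $x_{-1,\ep_2}=y_{-1,\ep_2}=0$ and hence $R=0$, the coefficient matrix is exactly the invertible matrix above, while the inhomogeneous terms reduce to $X_z=P_z(0,0,0)=g_{1,001}$ and $Y_z=Q_z(0,0,0)=g_{2,001}$. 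Both of these vanish by \eqref{eq:normal2}, so the linear system for $(x_{-1,\ep_2})_z(\theta,0)$ and $(y_{-1,\ep_2})_z(\theta,0)$ is homogeneous with invertible coefficient matrix, forcing $(x_{-1,\ep_2})_z(\theta,0)=(y_{-1,\ep_2})_z(\theta,0)=0$.

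The computation itself is routine; the only real content is conceptual, namely recognizing that the three equalities $g_{1,010}=g_{1,001}=g_{2,001}=0$ of \eqref{eq:normal2} are precisely what is needed here: the first makes the $(x,y)$-Jacobian lower triangular with positive diagonal, while the latter two kill the inhomogeneous terms $X_z,Y_z$, so that both the zeroth- and first-order $z$-jets of $x_{-1,\ep_2}$ and $y_{-1,\ep_2}$ are forced to vanish. I would also note that, since $\theta$ runs over the compact circle and the Jacobian stays invertible for small $z$, the implicit function theorem applies uniformly with $\theta$ as a smooth parameter, so that $x_{-1,\ep_2},y_{-1,\ep_2}$ are smooth in $(\theta,z)$ and the differentiation above is legitimate.
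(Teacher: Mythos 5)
Your proof is correct and takes essentially the same route as the paper's: the zeroth-order vanishing via the uniqueness clause of the implicit function theorem applied to the zero section at $z=0$, and the first-order vanishing from the normalization $g_{1,001}=g_{2,001}=0$ in \eqref{eq:normal2}. The only difference is that you spell out the linear system for the $z$-derivatives explicitly, whereas the paper compresses that step into a one-line reference to \eqref{eq:xyalphabeta1} and \eqref{eq:normal2}.
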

\begin{proof}
We consider equations $X_{-1,\ep_2}(\theta,x,y,0)
=Y_{-1,\ep_2}(\theta,x,y,0)=0$.
Then since \eqref{eq:normal2}, we have
functions $\bar x_{-1,\ep_2}(\theta),\bar y_{-1,\ep_2}(\theta)$ such that
\begin{equation*}
X_{-1,\ep_2}(\theta,\bar x_{-1,\ep_2}(\theta),\bar y_{-1,\ep_2}(\theta),0)
=Y_{-1,\ep_2}(\theta,\bar x_{-1,\ep_2}(\theta),\bar y_{-1,\ep_2}(\theta),0)=0.
\end{equation*}
We remark that by the implicit function theorem, 
each $\bar x_{-1,\ep_2}(\theta),\bar y_{-1,\ep_2}(\theta)$ is unique.
On the other hand, 
since $P(0,0,0)=Q(0,0,0)=0$, the equality
$X_{-1,\ep_2}(\theta,0,0,0)=Y_{-1,\ep_2}(\theta,0,0,0)=0$ is satisfied.
Thus $\bar x_{-1,\ep_2}(\theta)=\bar y_{-1,\ep_2}(\theta)=0$ is a solution of
$X_{-1,\ep_2}(\theta,x,y,0)=Y_{-1,\ep_2}(\theta,x,y,0)=0$.
By the uniqueness, $\bar x_{-1,\ep_2}(\theta)=\bar y_{-1,\ep_2}(\theta)=0$ holds.
By the definition of the functions
$\bar x_{-1,\ep_2}(\theta), \bar y_{-1,\ep_2}(\theta)$, it holds that
$\bar x_{-1,\ep_2}(\theta)=x_{-1,\ep_2}(\theta,0), \bar y_{-1,\ep_2}(\theta)=y_{-1,\ep_2}(\theta,0)$.
This shows $x_{-1,\ep_2}(\theta,0)=0$ and $y_{-1,\ep_2}(\theta,0)=0$.
Moreover, by \eqref{eq:xyalphabeta1} and \eqref{eq:normal2}, 
it holds that $(x_{-1,\ep_2})_z(\theta,0)=(y_{-1,\ep_2})_z(\theta,0)=0$.
The equation \eqref{eq:impld4536}
is obvious by \eqref{eq:impld4535}.
\end{proof}
Thus a double cover of the bifurcation set can be parameterized by
\begin{equation}\label{eq:impld4070}
b_{-1,\ep_2}(\theta,z)=b(\theta,z)
=(x_{-1,\ep_2}(\theta,z),y_{-1,\ep_2}(\theta,z),z):\hat{\M}\to\R^3
\quad(\ep_2=\pm1).
\end{equation}
We define the source space $\hat{\M}$ of $b_{-1,\ep_2}$ by 
$\hat{\M}_{\ep_2}$. 
On the other hand, since $R_z(0,0,0)=g_{3,001}\ne0$, and 
$$\alpha_{-1,\ep_2}(\theta)=\alpha_{-1,\ep_2}(\theta+\pi/2),\ 
\beta_{-1,\ep_2}(\theta)=\beta_{-1,\ep_2}(\theta+\pi/2),$$
we can regard $(\theta,z)\in \M$ as a parameter of the
bifurcation set.
We have the following proposition.
\begin{proposition}
The map\/ $b_{-1,\ep_2}:\hat\M_{\ep_2}\to\R^3$ is 
a front near\/ $\{z=0\}\subset \hat\M$.
\end{proposition}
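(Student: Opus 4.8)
The plan is to exhibit an explicit smooth normal field along $b=b_{-1,\ep_2}$, deduce that $b$ is a frontal, and then show that the Legendrian lift $(b,\nu)$ is an immersion near $\{z=0\}$. Write $b(\theta,z)=(x(\theta,z),y(\theta,z),z)$ with $x=x_{-1,\ep_2}$, $y=y_{-1,\ep_2}$, abbreviate $\alpha=\alpha_{-1,\ep_2}$, $\beta=\beta_{-1,\ep_2}$, $X=X_{-1,\ep_2}$, $Y=Y_{-1,\ep_2}$, and use subscripts for partial derivatives. First I would differentiate the identities $X\equiv Y\equiv0$ (with $x,y$ substituted) in $\theta$ and in $z$. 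Since $X_\theta=R^2\alpha'$ and $Y_\theta=R^2\beta'$ ($R$ evaluated along $b$), this yields
$$
X_xx_\theta+X_yy_\theta=-R^2\alpha',\qquad Y_xx_\theta+Y_yy_\theta=-R^2\beta',
$$
together with $X_xx_z+X_yy_z+X_z=Y_xx_z+Y_yy_z+Y_z=0$. From these one checks at once that
$$
\nu_0=(\beta'X_x-\alpha'Y_x,\ \beta'X_y-\alpha'Y_y,\ \beta'X_z-\alpha'Y_z)
$$
satisfies $\nu_0\cdot b_\theta=\nu_0\cdot b_z=0$, so $\nu_0$ is a smooth normal field and the candidate unit normal is $\nu=\nu_0/|\nu_0|$.

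The next step is frontality, i.e.\ that $\nu_0$ does not vanish near $\{z=0\}$. By Lemma~\ref{lem:xyg3} and \eqref{eq:normal2}, along $\{z=0\}$ (where $R=0$) the two spatial gradients of $X$ and $Y$ are $(g_{1,100},0,0)$ and $(g_{2,100},g_{2,010},0)$, which are independent since $g_{1,100},g_{2,010}>0$; hence they remain independent for small $z$. Writing $\nu_0=\beta'(X_x,X_y,X_z)-\alpha'(Y_x,Y_y,Y_z)$, it can therefore vanish only where $(\alpha',\beta')=(0,0)$. A short trigonometric computation shows these common zeros form the finite set $\{\cos6\theta=\ep_2\}$, at each of which $\alpha'$ has a simple zero. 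Near a line $\theta=\theta_i$ I would factor $\nu_0=(\theta-\theta_i)\widehat{\nu}_0$ and check $\widehat{\nu}_0\ne0$, since the coefficient of $(Y_x,Y_y,Y_z)$ in $\widehat{\nu}_0$ equals $-\alpha''(\theta_i)\ne0$ and the gradients are independent. Thus the normal direction $[\nu_0]$ extends smoothly across the lines $\theta=\theta_i$, giving (with a consistent sign) a smooth unit normal $\nu$ near $\{z=0\}$, so $b$ is a frontal. Geometrically $b$ collapses $\{z=0\}$ to the origin by Lemma~\ref{lem:xyg3}, so $\nu$ sweeps a circle of normal directions there.

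For the front condition I would show $(b,\nu)$ is an immersion. Off $\{z=0\}$ and off the lines $\theta=\theta_i$ the map $b$ is itself an immersion, so nothing is needed. On the singular set $b_\theta=0$, and there $(b,\nu)$ is an immersion precisely when $\nu_\theta\ne0$. At a point of $\{z=0\}$ with $\theta\ne\theta_i$ the third component of $\nu_0$ vanishes, and the resulting planar vector is a fixed linear image $L(\alpha',\beta')$ with $\det L=g_{1,100}g_{2,010}>0$; its turning rate is $g_{1,100}g_{2,010}(\alpha'\beta''-\beta'\alpha'')=8g_{1,100}g_{2,010}(\ep_2\cos6\theta-1)$, which is nonzero off the $\theta_i$, so $\nu_\theta\ne0$.

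The main obstacle is the crossing points $(\theta_i,0)$, where this turning rate vanishes and where the line $\theta=\theta_i$ (on which $b$ is singular for $z\ne0$ as well) meets $\{z=0\}$. Here I would instead compute the turning rate of the rescaled normal $\widehat{\nu}_0$. Writing $\alpha'=(\theta-\theta_i)a$ and $\beta'=(\theta-\theta_i)\tilde b$, one has $\alpha'\beta''-\beta'\alpha''=(\theta-\theta_i)^2(a\tilde b'-a'\tilde b)$; since $\alpha'\beta''-\beta'\alpha''=8(\ep_2\cos6\theta-1)$ has a zero of order exactly two at $\theta_i$, it follows that $a\tilde b'-a'\tilde b|_{\theta_i}=-144\ne0$, whence the turning rate of $\widehat{\nu}_0$ is $-144\,g_{1,100}g_{2,010}\ne0$ and $\nu_\theta\ne0$ at $(\theta_i,0)$; by continuity the same holds along $\theta=\theta_i$ for small $z\ne0$. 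Combining the cases, $(b,\nu)$ is an immersion in a neighbourhood of $\{z=0\}$, so $b$ is a front. The delicate point throughout is matching the first–order vanishing of $\nu_0$ on the lines $\theta=\theta_i$ (hence the rescaling by $\theta-\theta_i$) with the order–two vanishing of $\alpha'\beta''-\beta'\alpha''$ at the common zeros, which is exactly what makes the turning rate survive.
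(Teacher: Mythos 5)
Your proof is correct, and the computations I checked all hold: $dX=(g_{1,100},0,0)$ and $dY=(g_{2,100},g_{2,010},0)$ along $\{z=0\}$, the common zeros of $(\alpha',\beta')$ at $\cos6\theta=\ep_2$ with $\alpha'$ having simple zeros there, the identity $\alpha'\beta''-\beta'\alpha''=8(\ep_2\cos6\theta-1)$, and the limit $-144$ at the crossings. But your route differs from the paper's at the decisive point, namely the choice of normal field, and the comparison is instructive. The paper first computes $b_\theta$ exactly (equation \eqref{eq:impld4150}): its planar part is $-z^2\lambda a\tilde A V$, where $\lambda=\sin3\theta$ or $\cos3\theta$ is the identifier of singularities and $\tilde A V$ never vanishes near $\{z=0\}$; having thus split off the degenerate factor $z^2\lambda$ globally, it takes as normal $\tilde\nu=\sin\theta\,dX+\cos\theta\,dY$ (resp.\ $\cos\theta\,dX-\sin\theta\,dY$) as in \eqref{eq:impld4630}, whose coefficients never vanish simultaneously, so $\tilde\nu\neq0$ everywhere near $\{z=0\}$ and the whole front condition collapses to one uniform check: at $z=0$ one has $\tilde\nu_\theta=\cos\theta\,dX-\sin\theta\,dY$, hence $\tilde\nu,\tilde\nu_\theta$ are independent because $dX,dY$ are, with no case analysis at all. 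Your $\nu_0=\beta'\,dX-\alpha'\,dY$ is precisely $\pm4\lambda\,\tilde\nu$; by retaining the factor $\lambda$ you are forced into exactly the extra work you describe --- local rescaling by $(\theta-\theta_i)$ for frontality, turning-rate computations, and the matching of the simple zero of $(\alpha',\beta')$ against the double zero of $\alpha'\beta''-\beta'\alpha''$ at $(\theta_i,0)$. What your route buys is economy of input: it uses only the implicit-differentiation identities, never the precise structure of $b_\theta$. What the paper's buys is uniformity and brevity. Two points you should make explicit to fully close your argument: (i) the turning-rate formulas are exact along $\{z=0\}$ because $dX,dY$ are constant on that circle (indeed $x=y=0$ and $R=0$ there, so $(dX)_\theta=(dY)_\theta=0$ along the solution); (ii) $R\neq0$ along the solution for small $z\neq0$, which follows from Lemma~\ref{lem:xyg3} and $R_z(0,0,0)=g_{3,001}>0$ in \eqref{eq:normal2}, and which is what confines the singular set to $\{z=0\}\cup\bigcup_i\{\theta=\theta_i\}$ and so justifies your claim that $b$ is an immersion off these sets.
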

\begin{proof}
By \eqref{eq:impld4536}, there exists a function 
$\tilde R_{-1,\ep_2}(\theta,z)$
such that
$R(\theta,x_{-1,\ep_2}(\theta,z),$ $y_{-1,\ep_2}(\theta,z),z)$
$=z\tilde R_{-1,\ep_2}(\theta,z)$.
By \eqref{eq:impld4535} and $R_z(0,0,0)\ne0$, it holds that
$\tilde R_{-1,\ep_2}(\theta,0)\ne0$.
By a direct calculation,
\begin{equation}\label{eq:impld4130}
\begin{array}{ll}
&\Big((X_{-1,\ep_2})_\theta(x_{-1,\ep_2}(\theta,z),y_{-1,\ep_2}(\theta,z),z)
,\\
&\hspace{10mm}
(Y_{-1,\ep_2})_\theta(x_{-1,\ep_2}(\theta,z),y_{-1,\ep_2}(\theta,z),z),0\Big)
\\
&\hspace{20mm}=
\left\{
\begin{array}{ll}
4 z^2\tilde R_{-1,\ep_2}(\theta,z)^2
\sin3\theta\,\trans{(-\cos\theta, \sin\theta,0)}
\ (\ep_2=1),\\
-4 z^2\tilde R_{-1,\ep_2}(\theta,z)^2
\cos3\theta\,\trans{(\sin\theta, \cos\theta,0)}
\ (\ep_2=-1)
\end{array}\right.
\end{array}\end{equation}
holds. 
By the implicit function theorem,
\begin{align}
\label{eq:impld4150}
\pmt{(x_{-1,\ep_2})_\theta\\ (y_{-1,\ep_2})_\theta}
=&
-z^2\lambda_{-1,\ep_2}a_{-1,\ep_2}
\tilde A_{-1,\ep_2}V _{-1,\ep_2},\\
\tilde A_{-1,\ep_2}
=&
\pmt{(Y_{-1,\ep_2})_y&-(X_{-1,\ep_2})_y\\-(Y_{-1,\ep_2})_x&(X_{-1,\ep_2})_x},
\nonumber\\
\pmt{(x_{-1,\ep_2})_z\\ (y_{-1,\ep_2})_z}=&
\dfrac{-1}{\det\tilde A_{-1,\ep_2}}
\tilde A_{-1,\ep_2}
\pmt{(X_{-1,\ep_2})_z \\ (Y_{-1,\ep_2})_z},\label{eq:impld4160}
\\
V _{-1,\ep_2}=&\left\{
\begin{array}{ll}
\trans{(-\cos\theta,\sin\theta)}\ &(\ep_2=1)\\
\trans{( \sin\theta,\cos\theta)}\ &(\ep_2=-1),
\end{array}\right.\label{eq:impld4170}\\
\lambda_{-1,\ep_2}=&
\left\{
\begin{array}{ll}
\sin3\theta\ &(\ep_2=1)\\
\cos3\theta\ &(\ep_2=-1),
\end{array}
\right.\label{eq:impld4180}\\
a_{-1,\ep_2}=&\dfrac{4\tilde R_{-1,\ep_2}(\theta,z)^2}
{\det\tilde A_{-1,\ep_2}}\label{eq:impld4190}
\end{align}
hold. Thus
$b_\theta\times b_z$
is proportional to
\begin{align}
\label{eq:impld4630}
&\tilde\nu_{-1,\ep_2}(\theta,z)\\
=&
\left\{
\begin{array}{l}
\big(\sin\theta dX_{-1,\ep_2}+\cos\theta dY_{-1,\ep_2}\big)
(\theta,x_{-1,\ep_2}(\theta,z),y_{-1,\ep_2}(\theta,z),z)
\ (\ep_2=1),\\[1mm]
\big(\cos\theta dX_{-1,\ep_2}-\sin\theta dY_{-1,\ep_2}\big)
(\theta,x_{-1,\ep_2}(\theta,z),y_{-1,\ep_2}(\theta,z),z)
\ (\ep_2=-1),
\end{array}\right.\nonumber
\end{align}
where 
\begin{align*}
dX_{-1,\ep_2}
&=
\trans{\big((X_{-1,\ep_2})_x,(X_{-1,\ep_2})_y,(X_{-1,\ep_2})_z\big)},\\
dY_{-1,\ep_2}
&=
\trans{\big((Y_{-1,\ep_2})_x,(Y_{-1,\ep_2})_y,(Y_{-1,\ep_2})_z\big)}.
\end{align*}
Since 
$\tilde\nu_{-1,\ep_2}(\theta,0)\ne0$, 
the unit vector
$\nu_{-1,\ep_2}=\tilde\nu_{-1,\ep_2}/|\tilde\nu_{-1,\ep_2}|$ 
is a unit normal vector of $b$.
Since $b_\theta(\theta,0)=0$, to show that $b$ is a front
it is enough to see
$\tilde\nu_{-1,\ep_2}(\theta,0)$ and 
$(\tilde\nu_{-1,\ep_2})_\theta(\theta,0)$ are linearly independent.
It is easy to see by $dX$ and $dY$ are linearly independent
at $(\theta,0)$. This shows the assertion.
\end{proof}
The singular set $S(b)$ is
\begin{align}
\label{eq:impld4195}
&S(b)\\
=&
\left\{
\begin{array}{ll}
\{(\theta,z)\,|\,\sin3\theta=0\}=
\{(n\pi/3,z)\,|\,n=0,1,2,3,4,5\}
\ &(\ep_2=1),\\
\{(\theta,z)\,|\,\cos3\theta=0\}
=
\{(n\pi/3+\pi/2,z)\,|\,n=-1,0,1,2,3,4\}
\ &(\ep_2=-1).
\end{array}\right.
\nonumber
\end{align}
\subsection{$D_4^+$ singularity}\label{sec:d4pdisc}
We will give a parametrization of the bifurcation set
by a similar method to that of Section \ref{sec:d4mdisc}
in the case of $\ep_1=1$.
Let $I=(-\ep,\ep)$ be an open interval. 
We consider a map $\pi:\R\times I\to \R^2$ defined by 
$\pi_1((\theta,r))=(r\cosh\theta,r\sinh\theta)$.

We assume $\ep_1=1$ in \eqref{eq:normal1}.
Then
$\det H_F=u^2-v^2+R(x,y,z)^2$.
We set
\begin{equation}\label{eq:impld4520}
u=r\cosh2\theta,\quad
v=r\sinh2\theta.
\end{equation}
Then $r^2+R^2=0$ can be solved by 
$r=\ep_2 R$, where $\ep_2=\pm1$.
We set
\begin{equation}\label{eq:impld4540}
\begin{array}{rl}
X_{1,\ep_2}=&
P(x,y,z)+R(x,y,z)^2\alpha_{1,\ep_2}(\theta),\\
Y_{1,\ep_2}=&
Q(x,y,z)+R(x,y,z)^2\beta_{1,\ep_2}(\theta),\\
\alpha_{1,\ep_2}(\theta)=&\dfrac{\cosh 2\theta^2+\sinh2\theta^2}{2}
+\ep_2 \cosh 2\theta,\\
\beta_{1,\ep_2}(\theta)=&\cosh 2\theta \sinh2\theta
-\ep_2 \sinh2\theta.
\end{array}
\end{equation}
There exist $x_{1,\ep_2}(\theta,z)$ and $y=y_{1,\ep_2}(\theta,z)$
such that 
$$X_{1,\ep_2}(\theta,x_{1,\ep_2}(\theta,z),y_{1,\ep_2}(\theta,z),z)=
Y_{1,\ep_2}(\theta,x_{1,\ep_2}(\theta,z),y_{1,\ep_2}(\theta,z),z)=0$$ 
holds identically.
By the same proof as for Lemma \ref{lem:xyg3},
we have 
$x_{1,\ep_2}(\theta,0)=y_{1,\ep_2}(\theta,0)=0$,
$(x_{1,\ep_2})_z(\theta,0)=(y_{1,\ep_2})_z(\theta,0)=0$,
$R(x_{1,\ep_2}(\theta,0),y_{1,\ep_2}(\theta,0),0)=0$.
Thus 
there exists a function $\tilde R_{1,\ep_2}(\theta,z)$
such that
$$R(\theta,x_{1,\ep_2}(\theta,z),y_{1,\ep_2}(\theta,z),z)=
z\tilde R_{1,\ep_2}(\theta,z).$$
The bifurcation set, except for its intersection set, 
can be parameterized by
$b(\theta,z)=b_{1,\ep_2}(\theta,z)=(x_{1,\ep_2}(\theta,z),y_{1,\ep_2}(\theta,z),z)$.
We define the source space $\R\times I$ of $b_{1,\ep_2}$ by 
${\cal D}_{\ep_2}$. 
By a similar argument as in the case of a $D_4^-$ singularity,
setting
\begin{align}
V_{1,\ep_2} =&
\left\{
\begin{array}{ll}
\trans{(\cosh\theta, \sinh\theta)}\ &(\ep_2=1)\\
\trans{(\sinh\theta, \cosh\theta)}\ &(\ep_2=-1),
\end{array}
\right.\label{eq:bifhyp0100}\\
\lambda_{1,\ep_2}=&
\left\{
\begin{array}{ll}
\sinh3\theta\ &(\ep_2=1)\\
\cosh3\theta\ &(\ep_2=-1),
\end{array}
\right.\label{eq:bifhyp0200}\\
a_{1,\ep_2}=&
\dfrac{4\tilde R_{1,\ep_2}^2}{\det \tilde A_{1,\ep_2}},\quad
\tilde A_{1,\ep_2}=
\pmt{(Y_{1,\ep_2})_y&-(X_{1,\ep_2})_y\\-(Y_{1,\ep_2})_x&(X_{1,\ep_2})_x},
\label{eq:bifhyp0300}
\end{align}
we have
\begin{equation}\label{eq:impld4620}
((x_{1,\ep_2})_\theta,(y_{1,\ep_2})_\theta)
=-
z^2\lambda_{1,\ep_2}a_{1,\ep_2}\tilde A_{1,\ep_2}V _{1,\ep_2}.
\end{equation}
We set
\begin{align}
\label{eq:impld4635}
&\tilde\nu_{1,\ep_2}(\theta,z)\\
=&\left\{
\begin{array}{ll}
(-\cosh\theta dY_{1,\ep_2}+\sinh\theta dX_{1,\ep_2})
(x_{1,\ep_2}(\theta,z),y_{1,\ep_2}(\theta,z),z)\ &(\ep_2=1)\\[1mm]
(-\sinh\theta dY_{1,\ep_2}+\cosh\theta dX_{1,\ep_2})
(x_{1,\ep_2}(\theta,z),y_{1,\ep_2}(\theta,z),z)\ &(\ep_2=-1).
\end{array}\right.
\nonumber
\end{align}
Then $\nu_{1,\ep_2}=\tilde\nu_{1,\ep_2}/|\tilde\nu_{1,\ep_2}|$ is 
a unit normal vector field for $b$.
We can see $b_{1,\ep_2}:{\cal D}_{\ep_2}\to\R^3$ are fronts
by a similar method as in Section \ref{sec:d4mdisc}.
The singular set $S(b)$ are
\begin{equation}\label{eq:impld4660}
S(b)=
\left\{
\begin{array}{ll}
\{(\theta,z)\,|\,\sinh3\theta=0\}=
\{(0,z)\}\ & (\ep_2=1),\\
\{(\theta,z)\,|\,\cosh3\theta=0\}=\{z=0\}\ &(\ep_2=-1).
\end{array}
\right.
\end{equation}

\section{Geometry of bifurction sets}\label{sec:geombif}
\subsection{Asymptotic behavior of parametrization near singular points}
Let $b=b_{\ep_1,\ep_2}$ be the parametrization of the bifurcation set
as in
Sections \ref{sec:d4mdisc} and \ref{sec:d4pdisc}, and
$\nu=\nu_{\ep_1,\ep_2}$ its unit normal vector field.
We set the coefficients of the first and second fundamental forms
as follows:
\begin{equation*}
\begin{array}{lll}
E=E_{\ep_1,\ep_2}=b_\theta\cdot b_\theta,&
F=F_{\ep_1,\ep_2}=b_\theta\cdot b_z,&
G=G_{\ep_1,\ep_2}=b_z\cdot b_z,\\
L=L_{\ep_1,\ep_2}=-b_\theta\cdot \nu_\theta,&
M=M_{\ep_1,\ep_2}=-b_\theta\cdot \nu_z,&
N=N_{\ep_1,\ep_2}=-b_z\cdot \nu_z.
\end{array}
\end{equation*}
We have the following lemma.
\begin{lemma}\label{eq:asymbeh}
The coefficients of the first and second fundamental forms
satisfy
\begin{align}
E&=z^4\lambda(\theta)^2a(\theta,z)^2
\big(E_0(\theta)+zE_1(\theta)+z^2E(\theta,z)\big),\label{eq:efactor}\\
F&=z^3\lambda(\theta)a(\theta,z)
\big(F_0(\theta)+zF_1(\theta,z)\big),\label{eq:ffactor}\\
G&=1+z^2G_0(\theta,z)+z^3G_1(\theta)+z^4G_2(\theta,z),\label{eq:gfactor}\\
L&=z^2\lambda(\theta)a(\theta,z)|\tilde\nu|^{-1}
\big(L_0+zL_1(\theta)+z^2L_2(\theta)+z^3L_3(\theta)
+z^4L_4(\theta,z)\big),\label{eq:lfactor}\\
M&=z^2\lambda(\theta)a(\theta,z)|\tilde\nu|^{-1}
\big(M_0(\theta)+zM_1(\theta,z)\big),\label{eq:mfactor}\\
N&=|\tilde\nu|^{-1}
\big(N_0(\theta)+zN_1(\theta)+z^2N_2(\theta,z)\big),\label{eq:nfactor}
\end{align}
where\/
$E_i,F_i,G_i,L_i,M_i,G_i$ $(i=0,1,\ldots,4)$ are functions
of variables\/ $\theta$ or\/ $(\theta,z)$ as indicated,
and\/
$a(\theta,z)=a_{\ep_1,\ep_2}(\theta,z)$,
$\lambda(\theta)=\lambda_{\ep_1,\ep_2}(\theta)$,
$\tilde \nu(\theta,z)=\tilde \nu_{\ep_1,\ep_2}(\theta,z)$
are defined in\/ \eqref{eq:impld4170}, \eqref{eq:impld4180},
\eqref{eq:impld4190}, \eqref{eq:bifhyp0100},
\eqref{eq:bifhyp0200}, \eqref{eq:bifhyp0300}
respectively.
Moreover, $E_0(\theta)>0$ and\/
$L_0$ is a non-zero constant.
\end{lemma}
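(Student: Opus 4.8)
The plan is to compute all six coefficients directly from the implicit derivative formulas \eqref{eq:impld4150}, \eqref{eq:impld4160} (and their hyperbolic analogue \eqref{eq:impld4620}), read off the common prefactors, and Taylor-expand the remaining smooth factors in $z$ to the stated orders. The argument runs in parallel for all four sign pairs $(\ep_1,\ep_2)$: only the explicit expressions for $V$, $\lambda$, $a$, $\tilde\nu$ in \eqref{eq:impld4170}--\eqref{eq:impld4190}, \eqref{eq:bifhyp0100}--\eqref{eq:bifhyp0300}, \eqref{eq:impld4630} and \eqref{eq:impld4635} change. Two structural facts drive everything. First, by \eqref{eq:impld4150}/\eqref{eq:impld4620} the first two components of $b_\theta$ equal $-z^2\lambda a\,\tilde A V$ and its third component is zero, so $b_\theta$ carries the factor $z^2\lambda a$. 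Second, by Lemma \ref{lem:xyg3} the functions $x,y$ and their $z$-derivatives vanish on $\{z=0\}$, so $b_z=(x_z,y_z,1)$ with $(x_z,y_z)=z\,\tilde W(\theta,z)$ for a smooth vector-valued $\tilde W$.

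For the first fundamental form I substitute these into $E=b_\theta\cdot b_\theta$, $F=b_\theta\cdot b_z$, $G=b_z\cdot b_z$, obtaining $E=z^4\lambda^2a^2|\tilde A V|^2$, $F=-z^3\lambda a\,(\tilde A V\cdot\tilde W)$ and $G=1+z^2|\tilde W|^2$; expanding $|\tilde A V|^2$, $\tilde A V\cdot\tilde W$ and $|\tilde W|^2$ in $z$ yields \eqref{eq:efactor}--\eqref{eq:gfactor}. The positivity $E_0(\theta)>0$ is then immediate: $E_0=|\tilde A V|^2|_{z=0}$, and at $z=0$ the matrix $\tilde A$ is evaluated at the origin, where $R=0$ and \eqref{eq:normal2} force $\tilde A|_{z=0}=\pmt{g_{2,010}&0\\-g_{2,100}&g_{1,100}}$, which is invertible since $g_{1,100}g_{2,010}>0$; as $V\neq0$ we get $\tilde A V\neq0$.

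For the second fundamental form the key device is to differentiate the normality relations $b_\theta\cdot\tilde\nu=b_z\cdot\tilde\nu=0$ rather than to differentiate the unit normal $\nu=\tilde\nu/|\tilde\nu|$. Writing $\nu_\bullet=\tilde\nu_\bullet/|\tilde\nu|-\tilde\nu\,(|\tilde\nu|)_\bullet/|\tilde\nu|^2$ and using that the $\tilde\nu$-term is killed by $b_\bullet\cdot\tilde\nu=0$, one gets the clean identities $L=-|\tilde\nu|^{-1}\,b_\theta\cdot\tilde\nu_\theta$, $M=-|\tilde\nu|^{-1}\,b_\theta\cdot\tilde\nu_z$ and $N=-|\tilde\nu|^{-1}\,b_z\cdot\tilde\nu_z$. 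Since $b_\theta$ carries $z^2\lambda a$ while $b_z$ does not, the prefactor $z^2\lambda a\,|\tilde\nu|^{-1}$ appears in $L$ and $M$ but only $|\tilde\nu|^{-1}$ in $N$; expanding the smooth factors $\tilde A V\cdot\tilde\nu_\theta$, $\tilde A V\cdot\tilde\nu_z$ and $b_z\cdot\tilde\nu_z$ in $z$ produces \eqref{eq:lfactor}--\eqref{eq:nfactor}, the dependence of the lower coefficients on $\theta$ alone being automatic for Taylor coefficients.

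The one genuinely delicate point, which I expect to be the main obstacle, is showing that $L_0$ is a nonzero constant. Here I would evaluate $L_0=(\tilde A V\cdot\tilde\nu_\theta)|_{z=0}$ explicitly. At $z=0$ we have $R=0$, so by \eqref{eq:xyalphabeta1} the $R^2$-weighted $\theta$-dependent parts of $dX,dY$ and their $\theta$-derivatives all vanish, and \eqref{eq:normal2} leaves $dX|_{z=0}=\trans{(g_{1,100},0,0)}$, $dY|_{z=0}=\trans{(g_{2,100},g_{2,010},0)}$; consequently $\tilde\nu_\theta|_{z=0}$ reduces to the differentiation of the explicit trigonometric (or hyperbolic) coefficients in \eqref{eq:impld4630}/\eqref{eq:impld4635}. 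Substituting this together with $\tilde A V|_{z=0}$ and expanding, the $\theta$-dependent cross terms cancel and I expect to obtain $L_0=-g_{1,100}g_{2,010}$, a nonzero constant independent of $\theta$, with the analogous hyperbolic cancellation settling the $D_4^+$ cases. Carrying the remainders to the precise orders recorded in \eqref{eq:lfactor}--\eqref{eq:nfactor} across the four sign combinations is then the tedious but routine remainder of the computation.
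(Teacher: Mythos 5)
Your proposal is correct and follows essentially the same route as the paper's proof: factor $b_\theta$ via \eqref{eq:impld4150}/\eqref{eq:impld4620}, use $b_z(\theta,0)=(0,0,1)$ for the $F$ and $G$ expansions, and evaluate $L_0$ at $z=0$ where the $R^2$-terms and the $\theta$-derivatives of $dX,dY$ vanish, yielding $\pm g_{1,100}g_{2,010}\ne0$ by \eqref{eq:normal2}. Your sign $-g_{1,100}g_{2,010}$ versus the paper's $g_{1,100}g_{2,010}$ is an immaterial orientation convention, and your extra details (the factor $(x_z,y_z)=z\tilde W$ and killing the $|\tilde\nu|$-derivative terms by orthogonality) are simply steps the paper leaves implicit.
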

\begin{proof}
By \eqref{eq:impld4150} and \eqref{eq:impld4620},
it holds that the first two components of
$b_\theta$ are $z^2a\lambda AV $, and the third component of $b_\theta$ is 
zero, where
$A=A_{\ep_1,\ep_2}$, $V =V _{\ep_1,\ep_2}$.
Then we see
\eqref{eq:efactor}, \eqref{eq:lfactor}, \eqref{eq:mfactor}
and $E_0>0$.
By $x_z(\theta,0)=y_z(\theta,0)=0$, 
it holds that $b_z(\theta,0)=(0,0,1)$, and this shows
\eqref{eq:gfactor}.
To show \eqref{eq:ffactor},
since the third component of $b_\theta$ is zero, and $b_z(\theta,0)=(0,0,1)$, 
we see \eqref{eq:ffactor}.
We show $L_0$ is a non-zero constant in the case of $\ep_2=1$.
It is sufficient to show
\begin{equation}\label{eq:l0nonzero}
A^{-1}\pmt{-\cos\theta\\ \sin\theta \\ 0}\cdot
(\tilde\nu)_\theta\ne0.
\end{equation}
Since $X_{x\theta}=Y_{x\theta}=X_{y\theta}=Y_{y\theta}=0$ on $z=0$,
the right-hand side of \eqref{eq:l0nonzero}
is
$$
\pmt{-\cos\theta Y_y \\ \cos\theta Y_x+\sin\theta X_x}\cdot
\pmt{-\sin\theta Y_x+\cos\theta X_x \\-\sin\theta Y_y}
=
X_x(\theta,0)Y_y(\theta,0)=g_{1,100}\,g_{2,010}
$$
on $\{z=0\}$.
Thus this is a non-zero constant,
and the other cases can be shown 
by the same calculation.
Finally, \eqref{eq:nfactor} is clear.
\end{proof}

\subsection{Principal curvatures and principal directions}
In this section, we first show Theorem \ref{thm:bddunbdd}.
\begin{proof}[Proof of Theorem\/ {\rm \ref{thm:bddunbdd}}]
Let $K$ and $H$ be the Gaussian curvature and
the mean curvature. By Lemma \ref{eq:asymbeh}, one can see
$z^2\lambda(\theta)K(\theta,z)$ and 
$z^2 \lambda(\theta)H(\theta,z)$ are 
$C^\infty$ functions.
We define $\tilde K,\tilde H,\tilde I$ by
\begin{equation}\label{eq:gaussmeanset}
K=\dfrac{\tilde K}{z^2\lambda
a|\tilde\nu|^2\tilde I},\quad
H=\dfrac{\tilde H}{2z^2\lambda a|\tilde\nu|\tilde I},\quad
\tilde I=\dfrac{EG-F^2}{z^4\lambda^2a^2},
\end{equation}
where we include the variables $(\theta)$ and $(\theta,z)$
in the notation for functions, when it is clear.
Here, 
\begin{equation}\label{eq:tildehki}
\begin{array}{rl}
\tilde H=
&L_0+L_1z+
(G_0L_0 + L_2+\lambda aE_0N_0)z^2+H_3z^3+z^4O(0),\\
H_3=&
G_0L_1+L_0G_1+L_3+\lambda
(-2aF_0M_0
+a E_1 N_0
+ aE_0N_1
+ (a)_zE_0N_0),\\
\tilde K=&L_0N_0+(L_1N_0+L_0N_1)z+z^2O(0),\\
\tilde I=&E_0+zE_1+z^2O(0),
\end{array}
\end{equation}
where the term $z^iO(0)$ stands for a function of the form
$z^ih(\theta,z)$.
Then
$$
\sqrt{H^2-K}=
\dfrac{1}{2z^2|\lambda a||\tilde\nu|\tilde I}
\sqrt{\tilde H^2-4z^2\lambda a\tilde I\tilde K}.
$$
We note that the function 
$f(x)=\sqrt{b_1(\theta,z)^2-4xb_2(\theta,z)}$ 
of the variable $x$
($b_1\ne0$)
satisfies that $f(0)=|b_1(\theta,z)|$.
Thus there exists a function $\tilde f(\theta,z,x)$ such that
\begin{equation}\label{eq:sqroot}
f(x)=|b_1(\theta,z)|-x \tilde f(\theta,z,x).
\end{equation}
Substituting $x=z^2\lambda(\theta)$ into \eqref{eq:sqroot},
there exists a function
$\tilde h(\theta,z)$ such that
$$
\sqrt{\tilde H^2-4z^2\lambda a\tilde I\tilde K}
=
|\tilde H|-z^2\lambda\tilde h.
$$
Thus
$$
H\pm\sqrt{H^2-K}=
\left\{
\begin{array}{l}
\dfrac{\tilde H}{z^2\lambda a|\tilde\nu|\tilde I}
\pm\sgn\lambda
\dfrac{\tilde h}{2|a||\tilde\nu|\tilde I}\\
\pm\sgn\lambda
\dfrac{\tilde h}{2|a||\tilde\nu|\tilde I}
\end{array}
\right..
$$
We see 
$\tilde H(\theta,0)\ne0$ and $|a||\tilde\nu|\tilde I(\theta,0)\ne0$.
This implies that one of
the principal curvatures of $f$ is unbounded
near $z=0$ and $S(f)$,
and the other is bounded.
This proves the assertion.
\end{proof}
By \eqref{eq:tildehki}, we see that there are no vanishing mean curvature points, no
umbilic points, and no points with zero unbounded principal curvature
near the singular point.
We call the {\it unbounded principal curvature\/} the unbounded
one, and the {\it bounded principal curvature\/} the bounded one.
We give evaluation formulas for these principal curvatures,
such as $\tilde h(\theta,z)$ when $\tilde H(0)>0$.
By
the square root function ($b_0\ne0$) is expanded as
\begin{align*}
&\sqrt{b_0^2+b_1z+b_2z^2+b_3(z)z^3}\\
=&
|b_0|+
z\dfrac{b_1}{2 |b_0|}
+
z^2\dfrac{-b_1^2 + 4 b_0^2 b_2}{8 |b_0|^3}
+
z^3\dfrac{b_1^3 - 4 b_0^2 b_1 b_2 + 8 b_0^4 b_3(0)}{16 |b_0|^5}
+z^4O(0),
\end{align*}
we have
\begin{align}
\sqrt{\tilde H^2-4z^2\lambda a\tilde I\tilde K}=
&
L_0+L_1z
+(G_0L_0+L_2-\lambda aE_0N_0)z^2\label{eq:h2mzk}\\
&+
(
G_0 L_1+G_1L_0+L_3
-2\lambda aF_0M_0
-\lambda aE_0N_1 \nonumber\\
&-a E_1 \lambda N_0
- N_0E_0 \lambda(a)_z
)z^3+z^4O(0).\nonumber
\end{align}
Hence
$$
\tilde h(\theta,z)=2aE_0N_0
+2(aE_0N_1+aE_1N_0+E_0N_0(a)_z)z+z^2O(0).
$$
Thus the unbounded principal curvature $\kappa_1$ and the bounded
principal curvature $\kappa_2$ can be written as
\begin{align}
\kappa_1=&
\dfrac{1}{\lambda a|\tilde\nu|\tilde Iz^2}
(L_0+L_1z+z^2O(0)),\label{eq:kappa1}\\
\kappa_2=&
\dfrac{1}{a|\tilde\nu|\tilde I}
\big(aE_0N_0+
(aE_1N_0+aE_0N_1+(a)_zE_0N_0)z
+z^2O(0)\big).\label{eq:kappa2}
\end{align}
By a direct calculation, 
the principal directions with respect to $\kappa_1$ and $\kappa_2$
are
\begin{equation}\label{eq:asymprindir}
\begin{array}{ll}
V_1&=a\lambda|\tilde\nu|\tilde Iz^2(-N+\kappa_1G,M-\kappa_1F)
=(L_0+zO(0),z^2O(0)),\\
V_2&=\dfrac{|\tilde\nu|}{z^2\lambda a}
(-M+\kappa_2F,L-\kappa_2E)
=(-M_0+zO(0),L_0+zO(0)),
\end{array}
\end{equation}
respectively, under the identification 
$\partial_\theta=(1,0)$ and
$\partial_z=(0,1)$.
Since the zero set of a function $\delta(\theta,z)$ is a regular curve
and is transverse to the $\theta$-axis if $\delta_\theta(\theta,0)\ne0$
at $(\theta,0)$ which satisfies $\delta(\theta,0)=0$,
setting ${}'=\partial/\partial \theta$,
the following proposition holds:
\begin{proposition}\label{lem:ridges}
The ridge curve with respect to the
unbounded principal curvature
emanates from the direction\/ $\theta$ satisfying\/
$
a(\theta,0) (2 \lambda'E_0+3\lambda E_0')+2 \lambda E_0 a'(\theta,0)=0
$ under the condition\/
$
\big(a(\theta,0) (2 E_0\lambda'+3 E_0'\lambda)+2 E_0 \lambda a'(\theta,0)\big)'\ne0
$.
The ridge curve with respect to the
bounded principal curvature
emanates from the direction\/ $\theta$ satisfying\/
$
-E_1 L_0 N_0+E_0'M_0 N_0 +2 E_0 (L_0 N_1-M_0 N_0')=0$ under the condition\/
$
\big(-E_1 L_0 N_0+E_0'M_0 N_0 +2 E_0 (L_0 N_1-M_0 N_0')\big)'\ne0$.
There are no subparabolic curves with respect to the unbounded
principal curvature.
The subparabolic curve with respect to the
bounded principal curvature
emanates from the direction\/ $\theta$ satisfying\/
$2 E_0 N_0'-E_0'N_0 =0$ under the condition\/
$\big(2 E_0 N_0'-E_0'N_0 \big)'\ne0$.
\end{proposition}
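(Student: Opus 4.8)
The plan is to realize each curve as a zero set of a directional derivative of a principal curvature and to extract its emanating directions from the leading term in $z$. The ridge curves are $\{V_1\kappa_1=0\}$ and $\{V_2\kappa_2=0\}$, and the subparabolic curves are $\{V_2\kappa_1=0\}$ and $\{V_1\kappa_2=0\}$, with $\kappa_1,\kappa_2,V_1,V_2$ given explicitly in \eqref{eq:kappa1}, \eqref{eq:kappa2} and \eqref{eq:asymprindir}. In each case I would write the relevant derivative as a nowhere-zero factor times $z^{-p}\delta(\theta,z)$ with $\delta$ smooth, so that the curve coincides with $\{\delta=0\}$ near $z=0$. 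By the transversality criterion stated just before the proposition, $\{\delta=0\}$ is a regular curve emanating from a direction $\theta_0$ transverse to $\{z=0\}$ exactly when $\delta(\theta_0,0)=0$ and $\delta'(\theta_0,0)\ne0$; thus the whole proof reduces to identifying $\delta(\theta,0)$ with each of the four displayed functions of $\theta$ (the primed inequalities then being the transversality hypotheses verbatim).

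The engine of the computation is one identity not yet recorded in the excerpt. Since the first two components of $b_\theta$ equal $-z^2\lambda a\,\tilde A V$ (and the third vanishes) by \eqref{eq:impld4150} (resp. \eqref{eq:impld4620}), and since $b_z=(x_z,y_z,1)$ with $X,Y$ vanishing identically along the parametrization, differentiating $X\equiv0$ gives $X_z=-(X_xx_z+X_yy_z)$ and likewise for $Y$; feeding these into the cross product shows $b_\theta\times b_z=-z^2\lambda a\,\tilde\nu$ exactly. Comparing squared lengths with $EG-F^2=z^4\lambda^2a^2\tilde I$ then yields the key relation $\tilde I=|\tilde\nu|^2$. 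In particular $|\tilde\nu(\theta,0)|=\sqrt{E_0}$ and, differentiating in $z$ with $\tilde I_z(\theta,0)=E_1$ from \eqref{eq:tildehki}, $|\tilde\nu|_z(\theta,0)=E_1/(2\sqrt{E_0})$. These evaluations are exactly what turn the auxiliary factors $|\tilde\nu|,\tilde I$ in \eqref{eq:kappa1} and \eqref{eq:kappa2} into powers of $E_0$.

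Three of the four cases are then short. For $V_1\kappa_1$, by \eqref{eq:kappa1} the function $z^2\kappa_1$ equals $(L_0+L_1z+z^2O(0))/(\lambda a|\tilde\nu|\tilde I)$, with value $L_0/(\lambda a E_0^{3/2})$ at $z=0$; since the $\partial_z$-component of $V_1$ is $z^2O(0)$ and $L_0$ is a nonzero constant by Lemma \ref{eq:asymbeh}, the value of $z^2V_1\kappa_1$ at $z=0$ is $L_0\,\partial_\theta\big(L_0/(\lambda a E_0^{3/2})\big)$, a nonzero multiple of $\partial_\theta(\lambda a E_0^{3/2})=\tfrac12E_0^{1/2}\big(a(2\lambda'E_0+3\lambda E_0')+2\lambda E_0a'\big)$, giving the first condition. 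For $V_1\kappa_2$ the $\partial_z$-component of $V_1$ drops out at $z=0$, leaving $L_0\,\partial_\theta\kappa_2(\theta,0)$ with $\kappa_2(\theta,0)=N_0/\sqrt{E_0}$, whose zeros are $\{2E_0N_0'-E_0'N_0=0\}$. For $V_2\kappa_1$ the $\partial_z$-component of $V_2$ is $L_0+zO(0)$ and meets the $z^{-2}$ pole of $\kappa_1$, producing a leading term $-2L_0^2/(\lambda aE_0^{3/2})\,z^{-3}$; hence $V_2\kappa_1$ is a nonvanishing pole near $z=0$ and no subparabolic point with respect to $\kappa_1$ occurs.

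The remaining ridge curve $\{V_2\kappa_2=0\}$ is the main obstacle, since here $\kappa_2(\theta,0)$ is not enough and one must expand $\kappa_2$ to first order in $z$. From \eqref{eq:asymprindir} one has $V_2\kappa_2(\theta,0)=-M_0\,\partial_\theta\kappa_2(\theta,0)+L_0\,\partial_z\kappa_2(\theta,0)$, and the delicate quantity is $\partial_z\kappa_2(\theta,0)$, which a priori involves the unknown $z$-derivatives $a_z$, $\tilde I_z$ and $|\tilde\nu|_z$. Differentiating the quotient \eqref{eq:kappa2} and inserting the numerator's $z$-coefficient $aE_1N_0+aE_0N_1+a_zE_0N_0$, the $a_z$-contributions cancel, the numerator's $E_1$-term cancels the one coming from $\tilde I_z=E_1$, and the only surviving auxiliary term is the $|\tilde\nu|_z$ one, which the relation $|\tilde\nu|_z(\theta,0)=E_1/(2\sqrt{E_0})$ converts into precisely the otherwise-missing $-E_1L_0N_0$. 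After clearing the factor $2E_0^{3/2}$ this collapses to $-E_1L_0N_0+E_0'M_0N_0+2E_0(L_0N_1-M_0N_0')$, the stated condition. I expect this cancellation, together with checking the analogue of $\tilde I=|\tilde\nu|^2$ from \eqref{eq:impld4635} for the $D_4^+$ charts, to be the only steps requiring genuine care; everything else is the transversality criterion applied to the four functions $\delta(\theta,0)$.
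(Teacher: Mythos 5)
Your proposal is correct and takes essentially the same route as the paper: the paper's proof is precisely the ``direct calculation'' you carry out from \eqref{eq:kappa1}, \eqref{eq:kappa2} and \eqref{eq:asymprindir} together with the transversality criterion, and the identities $\tilde\nu\cdot\tilde\nu=E_0$, $(\tilde\nu\cdot\tilde\nu)'=E_0'$, $(\tilde\nu\cdot\tilde\nu)_z=E_1$ at $(\theta,0)$, which the paper merely records as a note, are exactly what you derive via $b_\theta\times b_z=-z^2\lambda a\,\tilde\nu$, i.e.\ $\tilde I=|\tilde\nu|^2$. In particular, your cancellation analysis for $V_2\kappa_2$ (the $a_z$- and $E_1$-terms) reproduces the paper's stated condition $-E_1L_0N_0+E_0'M_0N_0+2E_0(L_0N_1-M_0N_0')=0$, so the two arguments coincide.
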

\begin{proof}
Using \eqref{eq:kappa1}, \eqref{eq:kappa2} and \eqref{eq:asymprindir},
we have the assertions by direct calculations.
We note that 
$\tilde \nu\cdot\tilde \nu=E_0$,
$(\tilde \nu\cdot\tilde \nu)'=E_0'$,
$(\tilde \nu\cdot\tilde \nu)_z=E_1$
at $(\theta,0)$.
\end{proof}
\subsection{Singular curvature and limiting normal curvature}
As the set of singular points
near the singular point consists of a cuspidal edge,
we calculate the singular curvature and limiting normal curvature.
For simplicity, we just give conditions for whether their limits vanish or not.
By \eqref{eq:kskn}, $\kappa_n$ vanishes if and only if $N_0$ vanishes.
Thus if $\ep_1=-1$, then $\kappa_n=0$ at $(0,0)$ 
(respectively, $(\pi/3,0)$, $(2\pi/3,0)$)
if and only if
$g_{2,002}=0$ (respectively, 
$-\sqrt{3} g_{1,002}-g_{2,002}=0$,
$-\sqrt{3} g_{1,002}+g_{2,002}=0$).
These will appear as conditions for the configurations of parabolic curves
in Section \ref{sec:config}.
Furthermore, if $\ep_1=1$, then $\kappa_n=0$ at $(0,0)$
if and only if
$g_{2,002}=0$.
On the other hand, by \eqref{eq:kskn}, \eqref{eq:impld4630}, \eqref{eq:impld4635}
and $\trans{(x_{zz},y_{zz})}=\det A_{\ep_1,\ep_2}\trans{(X_{zz},X_{zz})}$ at $z=0$,
we obtain that
$\kappa_s=0$ at $(0,0)$ 
(respectively, $(\pi/3,0)$, $(2\pi/3,0)$) if and only if
$K_1=0$ (respectively, $K_2+2\sqrt{3}K_3=0$, $-K_2+2\sqrt{3}K_3=0$),
where
\begin{align*}
K_1=& g_{1,100} g_{2,002} g_{2,100} 
- (g_{2,010}^2 + g_{2,100}^2) (g_{1,002} + 3 g_{3,001})\\
K_2=&
(- 2 g_{1,002}+3 g_{3,001}^2) (g_{2,010}^2 + g_{2,100}^2) 
+ g_{1,100} (2 g_{2,002} g_{2,100} + 9 g_{1,100} g_{3,001})\\
K_3=&
 g_{1,100} (g_{1,100} g_{2,002} - g_{1,002} g_{2,100} + 3 g_{2,100} g_{3,001}^2).
\end{align*}
Furthermore, if $\ep_1=1$, then $\kappa_n=0$ at $(0,0)$
if and only if 
$K_1
=0$.
\subsection{Configurations of parabolic curves emanating from 
$D_4^\pm$-singularities}
\label{sec:config}
In this section, we consider
configurations of parabolic curves emanating from $D_4^\pm$-singularities.
By \eqref{eq:gaussmeanset} and \eqref{eq:tildehki}, 
and the same reason just before Proposition \ref{lem:ridges},
if\/ $N_0'(\theta)\ne0$ 
for any\/ $\theta$ satisfying\/ $N_0(\theta)=0$,
then the parabolic curve emanating out at the singular point
emanates in the direction\/ $\theta$ satisfying\/
$N_0(\theta)=0$ at $(\theta,0)$ in $\hat M_{\ep_2}$ or ${\cal D}_{\ep_2}$.
We set $\xi=g_{2,002}$, $\eta =g_{1,002}$ and $\zeta =g_{3,001}$.
We assume $\xi\ne0$ and $\eta -\zeta ^2\ne0$.
Moreover, we assume 
$N_0'(\theta)\ne0$ for any $\theta$ such that $N_0(\theta)=0$.
By $b_z(\theta,0)=(0,0,1)$, \eqref{eq:impld4160} and \eqref{eq:impld4630},
we see that $N_0(\theta)=0$ is equivalent to
\begin{align}
\xi \cos\theta+\eta  \sin\theta+
\zeta ^2\sin3\theta=0\quad&(\ep_1=-1,\ep_2=1),\label{eq:geomd41500}\\
-\xi \sin\theta+\eta  \cos\theta-
\zeta ^2\cos3\theta=0\quad&(\ep_1=-1,\ep_2=-1),\label{eq:geomd41510}\\
-\xi \cosh\theta+\eta  \sinh\theta+
\zeta ^2\sinh3\theta=0\quad&(\ep_1=1,\ep_2=1),\label{eq:geomd41520}\\
-\xi \sinh\theta+\eta  \cosh\theta-
\zeta ^2\cosh3\theta=0\quad&(\ep_1=1,\ep_2=-1),\label{eq:geomd41530}
\end{align}
by the assumption $\xi\ne0$ and $\eta -\zeta ^2\ne0$, and we see 
$\cos\theta=0$, $\sin\theta=0$ are not the solutions of 
\eqref{eq:geomd41500}, \eqref{eq:geomd41510},
and also $\sinh\theta=0$ is not the
solution of \eqref{eq:geomd41520}, \eqref{eq:geomd41530}.

\subsubsection{The case of $D_4^-$-singularities}
Setting $\ep_2=1$ and $t=\cot\theta$, 
the equation \eqref{eq:geomd41500} is equivalent to
\begin{equation}\label{eq:confd4m0100}
p(t)=
\xi t^3+(\eta +3 \zeta ^2) t^2+\xi t+\eta -\zeta ^2=0.
\end{equation}
Thus the number of asymptotic curves emanating from the singular point
is generically $3$ or $1$.
In this case, the singular set is $t=\pm1/\sqrt{3}$.
We consider where the solutions $t$ of \eqref{eq:confd4m0100}
are in $(-\infty,-1/\sqrt{3})$, $(-1/\sqrt{3},1/\sqrt{3})$ or
$(1/\sqrt{3},\infty)$.
Let $D_{-1}$ be the cubic discriminant of \eqref{eq:confd4m0100}:
$$
D_{-1}/4=-\xi^4 - 2 \xi^2 \eta ^2 - \eta ^4 + 24 \xi^2 \eta  \zeta ^2 
- 8 \eta ^3 \zeta ^2 - 18 \xi^2 \zeta ^4 - 18 \eta ^2 \zeta ^4 + 27 \zeta ^8.
$$
We set the following notation for the direction of the parabolic curves.
We dividing $\hat \M_1$ into
$\Omega_1=\{(\theta,r)\in \hat\M_1\,|\, \cot\theta<-1/\sqrt{3}\}$,
$\Omega_2=\{(\theta,r)\in \hat\M_1\,|\, -1/\sqrt{3}<\cot\theta<1/\sqrt{3}\}$,
$\Omega_3=\{(\theta,r)\in \hat\M_1\,|\, \cot\theta>1/\sqrt{3}\}$.
For the case of $\ep_2=-1$, setting $t=\tan\theta$, 
the equation \eqref{eq:geomd41510} is equivalent to $p(-t)=0$.
Thus the cubic discriminant is the same as $D_{-1}$.
We also divide $\hat \M_{-1}$ into 
$\tilde\Omega_1=\{(\theta,r)\in \hat\M_{-1}\,|\, \tan\theta<-1/\sqrt{3}\}$,
$\tilde\Omega_2
=\{(\theta,r)\in \hat\M_{-1}\,|\, -1/\sqrt{3}<\tan\theta<1/\sqrt{3}\}$,
$\tilde\Omega_3=\{(\theta,r)\in \hat\M_{-1}\,|\, \tan\theta>1/\sqrt{3}\}$.
By \eqref{eq:impld4630} and \eqref{eq:impld4195},
the directions defined by 
$\tilde\nu_{-1,1}(0,z)$ and $\tilde\nu_{-1,-1}(\pi/2,z)$
(respectively, 
$\tilde\nu_{-1,1}(\pi/3,z)$ and $\tilde\nu_{-1,-1}(\pi/2+\pi/3,z)$,
$\tilde\nu_{-1,1}(2\pi/3,z)$ and $\tilde\nu_{-1,-1}(\pi/2+2\pi/3,z)$)
are continuously connected across $\{z=0\}$.
Thus, the region 
$\Omega_1$ corresponds to $\tilde\Omega_3$,
$\Omega_3$ corresponds to $\tilde\Omega_1$,
and
$\Omega_2$ corresponds to $\tilde\Omega_2$.
The notation $(ijk|lmn)$, $(i,j,k,l,m,n\in\{1,2,3\})$ stands for
three parabolic curves of $b_{1,1}$ emanating from the singular point, as
they emanate into the regions $\Omega_i,\Omega_j,\Omega_k$,
and
three parabolic curves of $b_{1,-1}$ emanating from the singular point, as
they emanate into the regions $\tilde\Omega_l,\tilde\Omega_m,\tilde\Omega_n$,
respectively.
We set $c_1=\sqrt{3}\xi+\eta +3\zeta ^2$, $c_2=\sqrt{3}\xi-\eta -3\zeta ^2$.
We have the following theorem:
\begin{theorem}\label{thm:numberpara}
The regions\/ $\Omega_1$, $\Omega_2$, $\Omega_3$, 
$\tilde\Omega_1$, $\tilde\Omega_2$, $\tilde\Omega_3$
that the parabolic curves emanating from the singular point
which emanate into 
are summarized as in the
``configurations'' column of Table\/ {\rm \ref{tab:number}} according to the
sign of\/ $D_{-1}$.
\end{theorem}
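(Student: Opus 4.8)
The plan is to reduce the whole statement to a real-root-counting problem for the cubic $p(t)$ in \eqref{eq:confd4m0100} and then translate the count back through the geometry. The parabolic directions are the solutions $\theta$ of \eqref{eq:geomd41500}, and after the substitution $t=\cot\theta$ this equation is, up to the nonvanishing factor $\sin^{3}\theta$, equivalent to $p(t)=0$ (using $\sin3\theta=\sin\theta(3\cos^{2}\theta-\sin^{2}\theta)$). The boundary values $t=\pm1/\sqrt{3}$ are exactly $\cot(2\pi/3)$ and $\cot(\pi/3)$, and $t\to\pm\infty$ is the direction $\theta=0$; these are the singular directions of \eqref{eq:impld4195}. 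Hence the three arcs $\Omega_{1},\Omega_{2},\Omega_{3}$ are precisely the intervals $(-\infty,-1/\sqrt{3})$, $(-1/\sqrt{3},1/\sqrt{3})$, $(1/\sqrt{3},\infty)$ in the variable $t$, and a parabolic curve emanates into $\Omega_{i}$ exactly when $p$ has a root there. The assumptions $\xi\neq0$ and $\eta-\zeta^{2}\neq0$ guarantee that $p$ is a genuine cubic with nonzero constant term $p(0)=\eta-\zeta^{2}$, so that, as already noted just before \eqref{eq:confd4m0100}, neither $t=\infty$ (the singular direction $\theta=0$) nor $t=0$ (the direction $\theta=\pi/2$) is a root.

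First I would settle the total number of real roots via the discriminant: $D_{-1}>0$ gives three distinct real roots, $D_{-1}<0$ gives exactly one, and $D_{-1}=0$ is the nongeneric transition, which we discard (consistently with $N_{0}'\neq0$ at the roots, i.e.\ the roots of $p$ being simple). To distribute the roots among the three intervals I would evaluate $p$ and $p'$ at the endpoints. A direct computation gives $p(\pm1/\sqrt{3})$ proportional to $\sqrt{3}\,\eta\pm\xi$ (the $\zeta^{2}$-terms cancel, since $\sin3\theta$ vanishes at the singular directions), while $p'(\pm1/\sqrt{3})$ is proportional to $c_{1}$ and $c_{2}$ respectively. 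Together with the sign of the leading coefficient $\xi$, which fixes the behavior of $p$ at $\pm\infty$, these data determine by the intermediate value theorem—and, where $p$ is non-monotone, by whether a local extremum detected through the signs of $c_{1},c_{2}$ crosses zero—how many roots fall in each interval. This yields the entries of the ``configurations'' column of Table \ref{tab:number} for $b_{-1,1}$.

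Next I would transfer the result to $b_{-1,-1}$. By \eqref{eq:geomd41510}, after the substitution $t=\tan\theta$ the defining equation becomes $p(-t)=0$, whose discriminant is again $D_{-1}$ and whose roots are the negatives of those for $\ep_{2}=1$; the boundaries $\tan\theta=\pm1/\sqrt{3}$ are the singular directions of \eqref{eq:impld4195} in this chart. Thus a root in $\Omega_{3}$ ($t>1/\sqrt{3}$) produces a root in $\tilde\Omega_{1}$ ($-t<-1/\sqrt{3}$), and likewise $\Omega_{1}\leftrightarrow\tilde\Omega_{3}$, $\Omega_{2}\leftrightarrow\tilde\Omega_{2}$; this is exactly the correspondence established just before the statement from the matching of $\tilde\nu_{-1,1}$ and $\tilde\nu_{-1,-1}$ across $\{z=0\}$. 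Reading off the $\tilde\Omega$-distribution from the $\Omega$-distribution then fills in the second half of each pair $(ijk\,|\,lmn)$, and the table is assembled.

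The main obstacle will be the bookkeeping in the regime $D_{-1}>0$, where three real roots must be allocated and several distinct sub-configurations occur: the sign of $D_{-1}$ alone is insufficient, and one must combine the endpoint values $p(\pm1/\sqrt{3})\propto\sqrt{3}\,\eta\pm\xi$ with the endpoint derivative signs $c_{1},c_{2}$ and the sign of $\xi$ to separate the cases, all while checking that the genericity hypotheses $\xi\neq0$, $\eta-\zeta^{2}\neq0$, and $N_{0}'\neq0$ rule out a root sitting exactly at an endpoint. Once the analysis is organized—most efficiently by tabulating the sign pattern of $\bigl(p(-\tfrac{1}{\sqrt{3}}),\,p(\tfrac{1}{\sqrt{3}}),\,c_{1},\,c_{2},\,\xi\bigr)$ and matching it against the known root count—each line of Table \ref{tab:number} follows by a short deterministic argument.
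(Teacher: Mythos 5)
Your reduction is correct and coincides with the paper's own: the same substitution $t=\cot\theta$ turning \eqref{eq:geomd41500} into the cubic \eqref{eq:confd4m0100}, the same identification of $\Omega_1,\Omega_2,\Omega_3$ with the intervals $(-\infty,-1/\sqrt{3})$, $(-1/\sqrt{3},1/\sqrt{3})$, $(1/\sqrt{3},\infty)$, the same use of $D_{-1}$ for the total root count, and the same passage to $b_{-1,-1}$ via $p(-t)$ with the interchange $\Omega_1\leftrightarrow\tilde\Omega_3$, $\Omega_2\leftrightarrow\tilde\Omega_2$, $\Omega_3\leftrightarrow\tilde\Omega_1$. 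Where you genuinely differ is the counting device. The paper applies Budan's theorem / Descartes' rule of signs (Fact \ref{fact:budan}) to the shifted cubics $p_1(t)=9p(t+1/\sqrt{3})$, $p_2(t)=9p(t-1/\sqrt{3})$, $p_3(t)=9p(-t+1/\sqrt{3})$, $p_4(t)=9p(-t-1/\sqrt{3})$ and counts sign changes in their coefficient sequences, whereas you propose the intermediate value theorem at the endpoints together with the signs of $p'(\pm 1/\sqrt{3})$ and of the leading coefficient. These two procedures process literally the same sign data: the coefficients of $p(t\pm 1/\sqrt{3})$ are $p^{(k)}(\pm 1/\sqrt{3})/k!$, which are, up to sign and positive factors, exactly your five quantities $\xi$, $c_1$, $c_2$, $\xi+\sqrt{3}\eta$, $-\xi+\sqrt{3}\eta$ --- precisely the column headers of Table \ref{tab:number}. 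What Budan's theorem buys the paper is mechanization: each statement ``at most $k$ roots on the half-line, with even difference,'' combined with the total count $3$ or $1$ from $D_{-1}$, pins down the distribution without any separate discussion of critical points. In your version, endpoint signs of $p$ only give parities of the root counts in each interval, so in the three-root regime with parity pattern even/odd/even you must still distinguish, say, $(2,1,0)$ from $(0,1,2)$ and $(0,3,0)$ by locating the extrema of $p$ --- the critical-point analysis you allude to, which is genuine additional casework that Budan packages for free. Conversely, your route is self-contained (no appeal to Fact \ref{fact:budan}) and makes the geometric meaning of $c_1,c_2$ transparent as endpoint derivatives, and your observation that the $\zeta^2$-terms drop out of $p(\pm 1/\sqrt{3})$ because $\sin 3\theta$ vanishes at the singular directions is a nice touch absent from the paper. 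Either way, the ten-case bookkeeping you defer at the end is the actual bulk of the paper's proof (its Table \ref{tab:signchange0} and the surrounding case analysis), so your outline is sound but most of the work still lies there.
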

\begin{table}[!ht]
\begin{center}
\begin{tabular}{|c|c|c|c|c|c|c|c|}
\hline
Case name&$\xi$&$-\xi+\sqrt{3}\eta $&$\xi+\sqrt{3}\eta $&$c_1$&$c_2$&
\multicolumn{2}{|c|}{configurations}\\
\cline{7-8}
&&&&&&$D_{-1}>0$&$D_{-1}<0$\\
\hline
(I-1)       &$+$&$+$&$+$&$+$&any&$(223|122)$&$(3|1)$\\ \hline
(I-2-1-1)   &$+$&$-$&$+$&$+$&$+$&$(222|222)$&$(2|2)$\\ \hline
(I-2-1-2)   &$+$&$-$&$+$&$+$&$-$&$(233|112)$&$(2|2)$\\ \hline
(I-2-2-1)   &$+$&$-$&$-$&any&$+$&$(122|223)$&$(1|3)$\\ \hline
(I-2-2-2)   &$+$&$-$&$-$&$+$&$-$&$(133|113)$&$(1|3)$\\ \hline
(II-1-1-1)  &$-$&$+$&$+$&any&$-$&$(122|223)$&$(1|3)$\\ \hline
(II-1-2-1)&$-$&$+$&$-$&$+$&$-$&$(112|233)$&$(2|2)$\\ \hline
(II-1-2-2)&$-$&$+$&$-$&$-$&$-$&$(222|222)$&$(2|2)$\\ \hline
(II-2-2-1)  &$-$&$-$&$-$&$+$&$-$&$(113|133)$&$(3|1)$\\ \hline
(II-2-2-2)  &$-$&$-$&$-$&$-$&any&$(223|122)$&$(3|1)$\\ \hline
\end{tabular}
\caption{Configurations of parabolic curves.}
\label{tab:number}
\end{center}
\end{table}
Needless to say, geometrically, all of the cases
$(iij)$ $(i\ne j)$ are the same.
Thus we can draw the pictures of the configurations of parabolic curves
in Figure \ref{fig:param}.
We draw ``open'' pictures instead of the usual pictures 
as in Figure \ref{fig:usualopen},
with the parabolic curves drawn as thick lines,
the set of singular points drawn thin as lines,
and 
the intersection curves drawn as thin dotted lines.
\begin{figure}[ht]
\centering
\begin{tabular}{ccc}
\includegraphics[width=.18\linewidth]{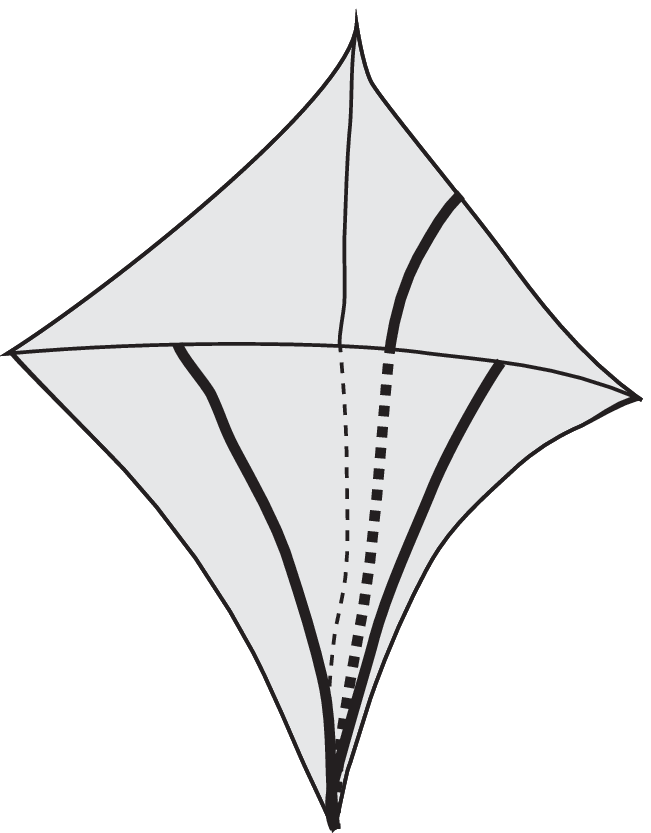}
&\hspace{1mm}&
\includegraphics[width=.2\linewidth]{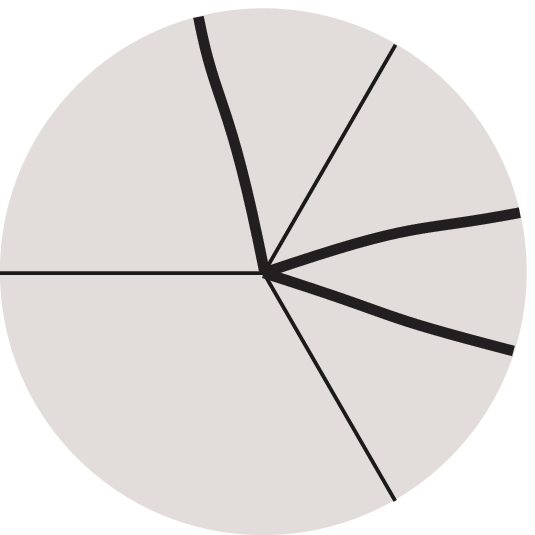}\\
\includegraphics[width=.17\linewidth]{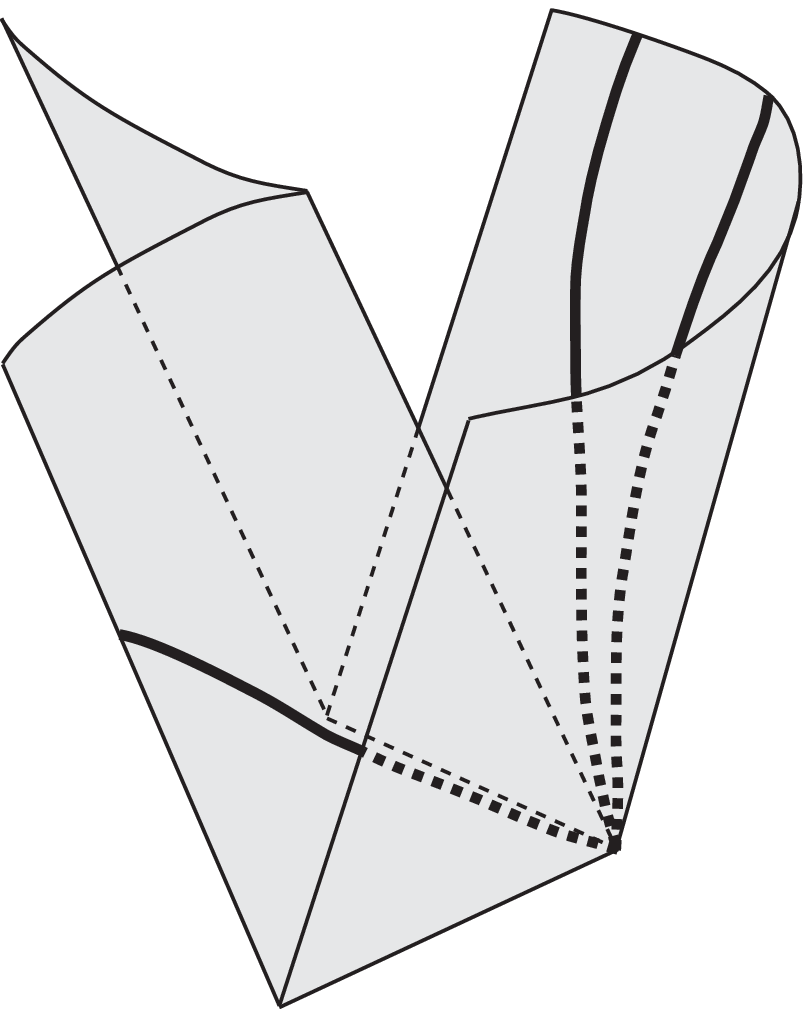}
&\hspace{1mm}&
\includegraphics[width=.2\linewidth]{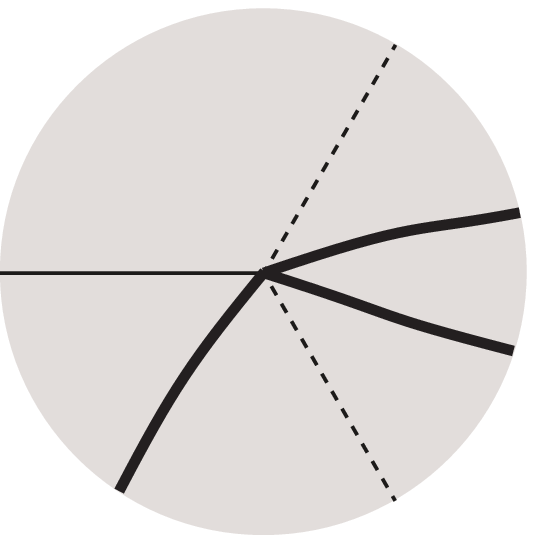}\\
\end{tabular}
\caption{Usual pictures (left) and ``open'' pictures (right)}
\label{fig:usualopen}
\end{figure}
\begin{figure}[ht]
\centering
\includegraphics[width=.18\linewidth]{figm113.eps}
\hspace{1mm}
\includegraphics[width=.18\linewidth]{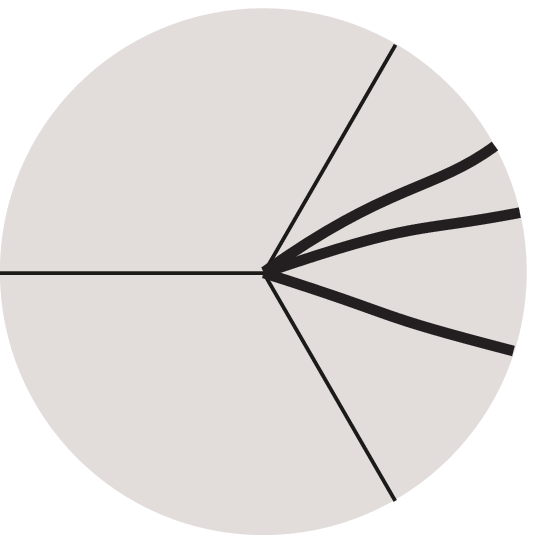}
\hspace{1mm}
\includegraphics[width=.18\linewidth]{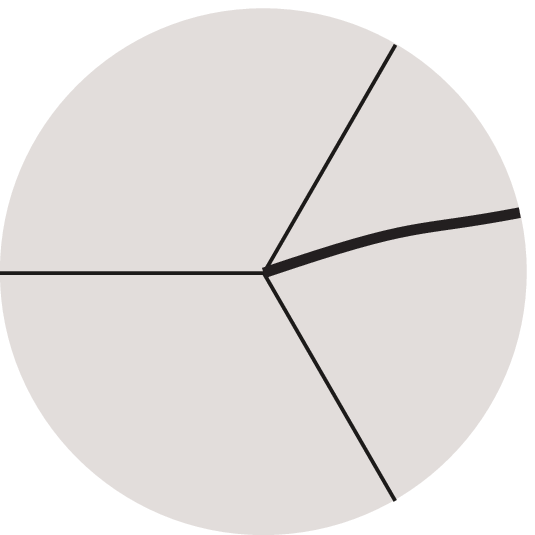}
\caption{The configurations $(112),(222),(1)$ (left to right)
of parabolic curves ($\ep_1=-1$)}
\label{fig:param}
\end{figure}
To show the theorem, we use the following fact, known as Budan's theorem or 
Descartes' rule of signs
(see \cite{descarte}, for example):
\begin{fact}\label{fact:budan}
Let\/ $p(t)$ be a polynomial in\/ $t$.
Then the number of roots of\/ $p(t)$ that are greater than\/ $\alpha$ 
is the same or less than 
the number of sign changes in the sequence of the coefficients
of\/ $p(t+\alpha)$, and their difference is an even number.
Furthermore, the number of roots of\/ $p(t)$ that are less than\/ $\alpha$ 
is the same or less than 
the number of sign changes in the sequence of the coefficients
of\/ $p(-t+\alpha)$, and their difference is an even number.
\end{fact}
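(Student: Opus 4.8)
The plan is to reduce both assertions to the classical Descartes' rule of signs for positive roots and then to prove that rule by a short induction on the degree combined with a parity lemma. First I would use that an affine substitution preserves root multiplicities. Putting $q(s):=p(s+\alpha)$, a root $t_0>\alpha$ of $p$ corresponds to a positive root $s_0=t_0-\alpha$ of $q$; putting $r(s):=p(-s+\alpha)$, a root $t_0<\alpha$ of $p$ corresponds to a positive root $s_0=\alpha-t_0$ of $r$. Since the coefficient sequences named in the statement are exactly those of $q$ and of $r$, both halves of the Fact follow once one establishes, for an arbitrary one-variable polynomial, that the number $Z$ of positive roots counted with multiplicity is at most the number $V$ of sign changes in its coefficient sequence and has the same parity as $V$. (The bound on the number of \emph{distinct} roots needed for the application is then immediate, since distinct roots $\le Z\le V$.)

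The parity statement I would prove directly. After factoring out the largest power $t^m$ dividing the polynomial $q$ --- which alters neither $V$ nor $Z$ --- I may assume $q(0)\ne 0$, so that the constant coefficient $a_0$ and the leading coefficient $a_n$ are both nonzero. Then $\sgn q(0^+)=\sgn a_0$ and $\sgn q(+\infty)=\sgn a_n$. On one hand, a root of even multiplicity produces no sign change of $q$ and a root of odd multiplicity produces one, so $Z$ has the parity of the number of odd-multiplicity positive roots, and this number is odd precisely when $q$ changes sign across $(0,\infty)$, i.e. when $\sgn a_0\ne\sgn a_n$. On the other hand, telescoping the sign changes along the coefficient sequence shows $V$ is odd precisely when $\sgn a_0\ne\sgn a_n$. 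Hence $Z\equiv V\pmod 2$.

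For the inequality $Z(q)\le V(q)$ I would induct on $\deg q$, again assuming $q(0)\ne0$. Two monotonicity facts drive the inductive step. Rolle's theorem gives $Z(q)\le Z(q')+1$: writing the distinct positive roots of $q$ as $\rho_1<\cdots<\rho_s$ with multiplicities $\mu_1,\dots,\mu_s$, the derivative $q'$ has a root of multiplicity $\mu_i-1$ at each $\rho_i$ and at least one root in each interval $(\rho_i,\rho_{i+1})$, so $Z(q')\ge\sum_i(\mu_i-1)+(s-1)=Z(q)-1$. For sign changes, the coefficient sign sequence of $q'$ is obtained from that of $q$ by deleting the (nonzero) constant entry, and deleting an end entry can only keep or decrease the number of sign changes, whence $V(q')\le V(q)$. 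Combining these with the induction hypothesis $Z(q')\le V(q')$ gives
\[
Z(q)\le Z(q')+1\le V(q')+1\le V(q)+1 .
\]
The parity lemma then upgrades $Z(q)\le V(q)+1$ to $Z(q)\le V(q)$ and simultaneously shows $V(q)-Z(q)$ is even, closing the induction and proving the Fact.

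The routine computations here are harmless; the delicate point throughout is the bookkeeping with zero coefficients (gaps in the sequence) and with root multiplicities. I expect the main care to be needed in two places: (i) the claim that deleting the constant term cannot increase $V$, which must be phrased in terms of the first surviving nonzero coefficient of $q'$ rather than literally $a_1$; and (ii) the multiplicity count in the Rolle step, where even-multiplicity roots contribute to $Z$ but not to any sign change of $q$. Checking the degenerate cases --- the reduction when $q(0)=0$ and the base of the induction where $q$ is constant (so $q'\equiv 0$) --- rounds out the argument.
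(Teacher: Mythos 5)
Your proposal is correct, but a comparison with ``the paper's proof'' is moot: the paper does not prove this Fact at all --- it is quoted as known, with a pointer to the literature (the reference \cite{descarte} on Descartes' rule of signs), and is then only \emph{used} in the proofs of Theorems \ref{thm:numberpara} and \ref{thm:numberpara2}. What you supply is therefore a genuine addition: a self-contained proof of the underlying rule. Your reduction is the right one --- the affine substitutions $s\mapsto s+\alpha$ and $s\mapsto -s+\alpha$ turn roots $>\alpha$ and $<\alpha$ of $p$ into positive roots of $q$ and $r$, whose coefficient sequences are exactly those named in the statement --- and your proof of the rule itself (parity via $\sgn a_0$ versus $\sgn a_n$, then induction on the degree combining the Rolle estimate $Z(q)\le Z(q')+1$ with the monotonicity $V(q')\le V(q)$, the parity lemma upgrading $Z\le V+1$ to $Z\le V$) is the standard correct argument, including the with-multiplicity strengthening, which indeed dominates the distinct-root count needed for the applications in the paper. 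The two delicate points you flag are real and you handle them properly: zero coefficients are invisible to $V$, so deleting the constant entry (and applying the induction hypothesis to $q'$ after factoring out a power of $t$ when $a_1=0$) cannot increase the number of sign changes; and the parity bookkeeping must count odd-multiplicity roots, since $Z=\sum_i\mu_i\equiv\#\{i:\mu_i\ \text{odd}\}\pmod 2$. One cosmetic remark: the paper's phrase ``their difference is an even number'' is exactly your parity lemma transported through the affine substitution, so your proof establishes both halves of the Fact simultaneously rather than treating the inequality and the parity as separate claims --- a small structural economy over the usual textbook presentation.
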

\begin{proof}[Proof of Theorem\/ {\rm \ref{thm:numberpara}}]
We assume $\ep_2=1$.
Using Fact \ref{fact:budan},
to study the numbers of roots of $p(t)=0$ in each interval
$(-\infty,-1/\sqrt{3})$, $(-1/\sqrt{3},1/\sqrt{3})$, 
$(1/\sqrt{3},\infty)$,
we look at the numbers of sign changes of
\begin{align*}
p_1(t)=
9p( t+1/\sqrt{3})=& 9\xi t^3+9c_1t^2+6\sqrt{3} c_1t+4\sqrt{3}( \xi+\sqrt{3}\eta ),\\
p_2(t)=
9p( t-1/\sqrt{3})=& 9\xi t^3-9c_2t^2+6\sqrt{3} c_2t+4\sqrt{3}(-\xi+\sqrt{3}\eta ),\\
p_3(t)=
9p(-t+1/\sqrt{3})=&-9\xi t^3+9c_1t^2-6\sqrt{3} c_1t+4\sqrt{3}( \xi+\sqrt{3}\eta ),\\
p_4(t)=
9(-t-1/\sqrt{3})=&-9\xi t^3-9c_2t^2-6\sqrt{3} c_2t+4\sqrt{3}(-\xi+\sqrt{3}\eta ).
\end{align*}
We look at the number of sign changes of these polynomials
under the conditions of each case.
It is summarized in Table \ref{tab:signchange0}.
\begin{table}[ht!]
\centering
\begin{tabular}{|c|c|c|c|c|c|}
\hline
case        &$p_1$&$p_2$&$p_3$&$p_4$\\
\hline
(I-1)
(II-2-2-2)  &   0 &     &     &    1\\
\hline
(I-2-1-1)   &   0 &     &     &    0\\
\hline
(I-2-1-2)   &   0 &    1&     &     \\
\hline
(I-2-2-1)   
(II-1-1-1)  &   1 &     &     &    0\\
\hline
(I-2-2-2)   &   1 &    1&     &     \\
\hline
(II-1-2-1)&     &     &    1&    0\\
\hline
(II-2-2-1)  &     &     &    1&    1\\
\hline
\end{tabular}
\caption{Necessary number of sign changes.}
\label{tab:signchange0}
\end{table}

We first consider the case (I): $\xi>0$.
We assume (I-1): $-\xi+\sqrt{3}\eta >0$.
Then $\eta >0$, $\xi+\sqrt{3}\eta >0$ and $c_1>0$ hold.
Since the number of sign changes in the sequence 
$\xi,c_1,c_1,\xi+\sqrt{3}\eta$
of the coefficients of $p_1$ 
(the number of sign changes in $p_1$) is zero,
there are no roots in $(1/\sqrt{3},\infty)$,
since the number of sign changes in 
$p_4$ i.e., the number of sign changes in 
$-\xi,-c_2,-c_2,-\xi+\sqrt{3}\eta$
is one for any of the cases of $c_2$,
thus the configuration of the roots is $(223)$ or $(3)$ according to the
sign of $D_{-1}$.
We assume (I-2): $-\xi+\sqrt{3}\eta <0$.
We consider the case (I-2-1): $\xi+\sqrt{3}\eta >0$.
Then $\sqrt{3}c_1=2\xi+\xi+\sqrt{3}\eta +3\sqrt{3}\zeta ^2>0$.
Since the number of sign changes in $p_1$ is zero,
there are no roots in $(1/\sqrt{3},\infty)$.
If $c_2>0$, then the number of sign changes in
$p_4$ is 0.
So, in this case we have the configuration is $(222)$ or $(2)$.
If $c_2<0$, then the number of sign changes in
$p_2$ is $1$.
So, in this case we have $(233)$ or $(2)$.
We consider the case (I-2-2): $\xi+\sqrt{3}\eta <0$.
Then the number of sign changes in $p_1$ is $1$.
If $c_2>0$, then the number of sign changes in $p_4$ is $0$.
So, in this case we have $(122)$ or $(1)$.
If $c_2<0$, then the number of sign changes in $p_2$ is $1$.
So, in this case we have $(133)$ or $(1)$.
We second consider the case (II): $\xi<0$.
We assume (II-1): $-\xi+\sqrt{3}\eta >0$.
We consider the case (II-1-1): $\xi+\sqrt{3}\eta >0$.
Then $\eta >0$ and $c_2<0$.
The number of sign changes in $p_1$ is $1$ for any  $c_1$,
and there are no sign changes in $p_4$.
So, in this case we have $(122)$ or $(1)$.
We consider the case (II-1-2): $\xi+\sqrt{3}\eta <0$.
Then $c_2<0$ holds.
If $c_1>0$ (II-1-2-1), 
then the number of sign changes in $p_3$ is $1$, and that in $p_4$
is zero.
So, in this case we have $(112)$ or $(2)$.
If $c_1<0$ (II-1-2-2), then 
the number of sign changes in $p_1$ and $p_3$ are both $0$.
So, in this case we have $(222)$ or $(2)$.
We assume (II-2): $-\xi+\sqrt{3}\eta <0$.
Then $\eta >0$ and $\xi+\sqrt{3}\eta <0$.
If $c_1>0$, then the number of sign changes in $p_3$ and
$p_4$ are both $1$ for any $c_2$.
So this case $(113)$,  $(3)$.
If $c_1<0$, then there is no sign changes in $p_1$ and
the number of sign changes in $p_4$ is $1$.
So this case $(223)$,  $(3)$.
Since the above takes all the possibilities of signs of
$\xi$, $-\xi+\sqrt{3}\eta $, $\xi+\sqrt{3}\eta $, $c_1$ and $c_2$,
we see the assertion.
For the case of $b_{-1,-1}$, since the equation
which we have to consider is $p(-t)$,
the configurations of this case can be obtained by interchanging
$\Omega_1$ with $\tilde \Omega_3$,
$\Omega_2$ with $\tilde \Omega_2$ and
$\Omega_3$ with $\tilde \Omega_1$.
\end{proof}

\subsubsection{The case of $D_4^+$-singularity}
Setting $t=\coth\theta$, 
the equation \eqref{eq:geomd41520} is equivalent to
\begin{equation}\label{eq:confd4m0200}
q(t)=-\xi t^3+(\eta+3 \zeta^2)t^2+\xi t-\eta+\zeta^2=0,
\end{equation}
and setting $t=\tanh\theta$, the equation \eqref{eq:geomd41530}
is equivalent to $q(t)=0$.
Thus the number of asymptotic curves emanating from the singular point
in $({\cal D}_1\cup{\cal D}_{-1})\cap\{r>0\}$
is generically $3$ or $1$ according to the 
cubic discriminant $D_1$ of $q(t)=0$:
$$
D_1/4=\xi^4 - 2 \xi^2 \eta^2 + \eta^4 + 24 \xi^2 \eta \zeta^2 
+ 8 \eta^3 \zeta^2 - 18 \xi^2 \zeta^4 + 
  18 \eta^2 \zeta^4 - 27 \zeta^8.
$$
We divide ${\cal D}_1$ and ${\cal D}_{-1}$ into the regions 
$\Omega_1=\{(\theta,r)\in {\cal D}_{1}\,|\, r>0,\ \coth\theta<-1\}$,
$\Omega_2=\{(\theta,r)\in {\cal D}_{-1}\,|\, r>0\}$,
$\Omega_3=\{(\theta,r)\in {\cal D}_{1}\,|\, r>0,\ \coth\theta>1\}$,
$\tilde\Omega_1=\{(\theta,r)\in {\cal D}_{1}\,|\, r<0,\ \coth\theta<-1\}$,
$\tilde\Omega_2=\{(\theta,r)\in {\cal D}_{-1}\,|\, r<0\}$,
$\tilde\Omega_3=\{(\theta,r)\in {\cal D}_{1}\,|\, r<0,\ \coth\theta>1\}$.
The notation $(ijk|lmn)$, $(i,j,k,l,m,n\in\{1,2,3\})$ stands for 
three parabolic curves of 
$b_{1,1}|_{{\cal D}_1\cap\{r>0\}}$ (if $i,j,k$ is 1 or 3) or 
$b_{1,-1}|_{{\cal D}_{-1}\cap\{r>0\}}$ (if $i,j,k$ is 2) 
emanating from the singular point, as
they emanate into the regions $\Omega_i,\Omega_j,\Omega_k$,
and 
three parabolic curves of 
$b_{1,1}|_{{\cal D}_1\cap\{r<0\}}$ (if $l,m,n$ is 1 or 3) or 
$b_{1,-1}|_{{\cal D}_1\cap\{r<0\}}$ (if $l,m,n$ is 2) 
emanating from the singular point, as
they emanate into the regions $\tilde\Omega_l,\tilde\Omega_m,
\tilde\Omega_n$.
We set $c_3=\eta +3\zeta ^2$, $c_4=-\eta +\zeta ^2$.
We have the following theorem:
\begin{theorem}\label{thm:numberpara2}
The regions\/ $\Omega_1$, $\Omega_2$, $\Omega_3$, 
$\tilde\Omega_1$, $\tilde\Omega_2$, $\tilde\Omega_3$
that the way how the parabolic curves emanate from the singular point
to the regions
are summarized as in the
``configurations'' columns in the Tables\/ 
{\rm \ref{tab:number10}},
{\rm \ref{tab:number20}},
{\rm \ref{tab:number30}} and\/
{\rm \ref{tab:number40}} 
according to the
sign of\/ $D_1$.
\begin{table}[h!]
\centering
\begin{tabular}{|c|c|c|c|c|c|c|c|}
\hline
assumption I:&$3\xi+c_3$&$3\xi-c_3$&$\xi+c_3$&$\xi-c_3$&$c_4$&
\multicolumn{2}{|c|}{configurations}\\
\cline{7-8}
$\xi>0,c_3>0$&&&&&&$D_1>0$&$D_1<0$\\
\hline
(I-1-1)&$+$&$+$&$+$&any&$+$&$(122|223)$&$(1|3)$\\
\hline
(I-1-2)&$+$&$+$&$+$&any&$-$&$(122|223)$&no\\
\hline
(I-2-1)&$+$&$-$&$+$&$-$&$+$&$(122|223)$&$(1|3)$\\
\hline
(I-2-2)&$+$&$-$&$+$&$-$&$-$&$(122|223)$&no\\
\hline
\end{tabular}
\caption{Configurations of parabolic curves when $\xi>0,c_3>0$.}
\label{tab:number10}
\end{table}
\begin{table}[h!]
\centering
\begin{tabular}{|c|c|c|c|c|c|c|c|}
\hline
assumption II:&$3\xi+c_3$&$3\xi-c_3$&$\xi+c_3$&$\xi-c_3$&$c_4$&
\multicolumn{2}{|c|}{configurations}\\
\cline{7-8}
$\xi<0,c_3>0$&&&&&&$D_1>0$&$D_1<0$\\
\hline
(II-1-1)&$+$&$-$&$+$&$-$&$+$&$(223|122)$&$(3|1)$\\
\hline
(II-2-1)&$-$&$-$&$+$&$-$&$+$&$(223|122)$&$(3|1)$\\
\hline
(II-3-1)&$-$&$-$&$-$&$-$&$+$&$(223|122)$&$(3|1)$\\
\hline
(II-1-2)&$+$&$-$&$+$&$-$&$-$&$(223|122)$&no\\
\hline
(II-2-2)&$-$&$-$&any&$-$&$-$&$(223|122)$&$(3|1)$\\
\hline
\end{tabular}
\caption{Configurations of parabolic curves when $\xi<0,c_3>0$.}
\label{tab:number20}
\end{table}
\begin{table}[h!]
\centering
\begin{tabular}{|c|c|c|c|c|c|c|c|}
\hline
assumption III:&$3\xi+c_3$&$3\xi-c_3$&$\xi+c_3$&$\xi-c_3$&$c_4$&
\multicolumn{2}{|c|}{configurations}\\
\cline{7-8}
$\xi<0,c_3<0$&&&&&&$D_1>0$&$D_1<0$\\
\hline
(III-1-1)&$-$&$-$&$-$&$-$&$+$&$(223|122)$&$(3|1)$\\
\hline
(III-2-1)&$-$&any&$-$&$+$&$+$&$(113|133)$&$(3|1)$\\
\hline
\end{tabular}
\caption{Configurations of parabolic curves when $\xi<0,c_3<0$.}
\label{tab:number30}
\end{table}
\begin{table}[h!]
\centering
\begin{tabular}{|c|c|c|c|c|c|c|c|}
\hline
assumption IV:&$3\xi+c_3$&$3\xi-c_3$&$\xi+c_3$&$\xi-c_3$&$c_4$&
\multicolumn{2}{|c|}{configurations}\\
\cline{7-8}
$\xi>0,c_3<0$&&&&&&$D_1>0$&$D_1<0$\\
\hline
(IV-1-1)&any&$+$&$-$&$+$&$+$&$(133|133)$&$(1|3)$\\
\hline
(IV-2-1)&$+$&$+$&$+$&$+$&$+$&$(122|223)$&$(1|3)$\\
\hline
\end{tabular}
\caption{Configurations of parabolic curves when $\xi>0,c_3<0$.}
\label{tab:number40}
\end{table}
\end{theorem}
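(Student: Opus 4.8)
The plan is to follow the blueprint of the proof of Theorem \ref{thm:numberpara}, replacing the trigonometric substitution by the hyperbolic one. Recall from \eqref{eq:geomd41520}, \eqref{eq:geomd41530} and \eqref{eq:confd4m0200} that, under $t=\coth\theta$ on the chart $\ep_2=1$ and $t=\tanh\theta$ on the chart $\ep_2=-1$, the condition $N_0(\theta)=0$ collapses to the single cubic $q(t)=-\xi t^3+c_3t^2+\xi t+c_4$, with $c_3=\eta+3\zeta^2$ and $c_4=-\eta+\zeta^2$. Since $\coth\theta\in(1,\infty)$ for $\theta>0$ and $\coth\theta\in(-\infty,-1)$ for $\theta<0$, while $\tanh\theta\in(-1,1)$, a real root of $q$ lying in $(1,\infty)$, in $(-1,1)$ or in $(-\infty,-1)$ produces a parabolic direction in $\Omega_3$, $\Omega_2$ or $\Omega_1$ respectively (on the side $r>0$). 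Thus the whole statement reduces to locating the real roots of $q$ relative to the two cut points $t=\pm1$, together with transporting the corresponding directions to the side $r<0$.

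First I would record the structural identities $q(1)=q(-1)=c_3+c_4=4\zeta^2>0$ (using $\zeta=g_{3,001}>0$) and the fact that the leading coefficient of $q$ equals $-\xi$. These immediately fix the parity of the number of roots in each interval: when $\xi>0$ there is an odd number in $(1,\infty)$ and an even number in each of $(-\infty,-1)$ and $(-1,1)$, and symmetrically when $\xi<0$. Combined with the sign of the cubic discriminant $D_1$, which distinguishes the total number of real roots ($3$ when $D_1>0$, $1$ when $D_1<0$), these parities already rule out several a priori distributions and account for the ``no'' entries in the tables.

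To pin down the exact counts I would apply Budan's theorem (Fact \ref{fact:budan}) at $\alpha=\pm1$. A direct expansion gives the coefficient sequences
\[
q(t+1):\ (-\xi,\,c_3-3\xi,\,2(c_3-\xi),\,4\zeta^2),\qquad q(t-1):\ (-\xi,\,3\xi+c_3,\,-2(\xi+c_3),\,4\zeta^2),
\]
\[
q(-t+1):\ (\xi,\,c_3-3\xi,\,2(\xi-c_3),\,4\zeta^2),\qquad q(-t-1):\ (\xi,\,3\xi+c_3,\,2(\xi+c_3),\,4\zeta^2),
\]
whose numbers of sign changes are governed precisely by the signs of $\xi$, of $3\xi\pm c_3$ and of $\xi\pm c_3$ --- exactly the quantities heading the columns of Tables \ref{tab:number10}--\ref{tab:number40}; the sign of $c_4=q(0)$ enters through $D_1$ and through the placement of roots inside $(-1,1)$. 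Splitting into the four regimes (I)--(IV) determined by the signs of $\xi$ and $c_3$, and within each regime by the signs of the auxiliary quantities, the sign-change counts (each an upper bound congruent mod $2$ to the true count) together with the parity and $D_1$ information determine the distribution of the roots among $\Omega_1,\Omega_2,\Omega_3$ in every subcase.

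It then remains to read off the configuration on the side $r<0$, i.e.\ the regions $\tilde\Omega_1,\tilde\Omega_2,\tilde\Omega_3$. As in the $D_4^-$ case, I would obtain the matching between the two sides by tracking the continuity of the normal direction $\tilde\nu_{1,\ep_2}$ of \eqref{eq:impld4635} across $\{z=0\}$, which glues each boundary of an $\Omega$-region to a definite boundary of a $\tilde\Omega$-region; assembling the two sides yields the entries $(ijk\,|\,lmn)$. I expect the main difficulty to be bookkeeping rather than analysis: one must (a) use the parities forced by $q(\pm1)>0$ and the sign of $D_1$ to remove the ``up to an even number'' slack left by Budan's theorem in each of the many subcases, and (b) fix the orientation of the $\Omega_i\leftrightarrow\tilde\Omega_j$ gluing correctly, since a single sign error there transposes whole columns of the tables. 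The closing step is the purely combinatorial check that regimes (I)--(IV) with their listed subcases exhaust all sign possibilities under the standing assumptions $\xi\neq0$ and $\eta-\zeta^2\neq0$.
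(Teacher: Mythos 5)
Your root-counting core coincides with the paper's proof: the paper also reduces everything to the cubic $q(t)$ and applies Fact \ref{fact:budan} to exactly the polynomials you list (in the paper's notation $q_3(t)=q(t+1)$, $q_4(t)=q(t-1)$, $q_5(t)=q(-t+1)$, $q_6(t)=q(-t-1)$, together with $q_1=q$ and $q_2(t)=q(-t)$), and its Table \ref{tab:signchange} records precisely the sign-change counts your coefficient sequences produce. Your observation that $q(1)=q(-1)=c_3+c_4=4\zeta^2>0$, with the resulting parity constraints, is a genuinely nice supplement: the paper removes the mod-$2$ slack of Budan's theorem only implicitly (all its sign-change counts happen to be $0$ or $1$), whereas your parity argument makes this step, and the ``no'' entries, transparent.

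The gap is in your step (b), the matching of the two sides of the bar in $(ijk\,|\,lmn)$. You propose to glue the $r>0$ and $r<0$ configurations by tracking continuity of $\tilde\nu_{1,\ep_2}$ from \eqref{eq:impld4635} across $\{z=0\}$, in analogy with the $D_4^-$ case. But that analogy breaks down: in the $D_4^-$ case the normal-continuity argument is needed to glue the two \emph{disjoint} parameter spaces $\hat\M_{1}$ and $\hat\M_{-1}$, whereas in the $D_4^+$ case both signs of $r$ live inside the \emph{same} connected charts ${\cal D}_{\ep_2}$ (since $r=\ep_2 z\tilde R_{1,\ep_2}$, the sign of $r$ is just the sign of $\pm z$), so the parametrization and its normal are automatically smooth across $\{z=0\}$ and the continuity argument yields only the tautological pairing of the two branches of each parabolic curve --- i.e.\ it pairs each parameter-space region with the region carrying the \emph{same} index, producing symmetric entries of the form $(abc\,|\,abc)$. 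That is not what the tables assert: every entry pairs the two sides via the interchange $\Omega_1\leftrightarrow\tilde\Omega_3$, $\Omega_2\leftrightarrow\tilde\Omega_2$, $\Omega_3\leftrightarrow\tilde\Omega_1$, e.g.\ $(122\,|\,223)$. The paper obtains this interchange not from the normal field but from the blow-up itself: $(-r\cosh\theta,-r\sinh\theta)$ is the $\pi$-rotation of $(r\cosh\theta,r\sinh\theta)$, so the $r<0$ half of the chart sits antipodally in the $(u,v)$-plane, and the region labels on the $r<0$ sheet are reversed relative to the $\theta$-coordinate. Without this $\pi$-rotation observation your argument cannot produce the asymmetric entries of Tables \ref{tab:number10}--\ref{tab:number40}; as you yourself note, a wrong choice here ``transposes whole columns,'' and your proposed mechanism makes exactly that wrong (symmetric) choice.
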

The pictures of the configurations of parabolic curves
are in Figure \ref{fig:parap}.
\begin{figure}[ht]
\centering
\includegraphics[width=.18\linewidth]{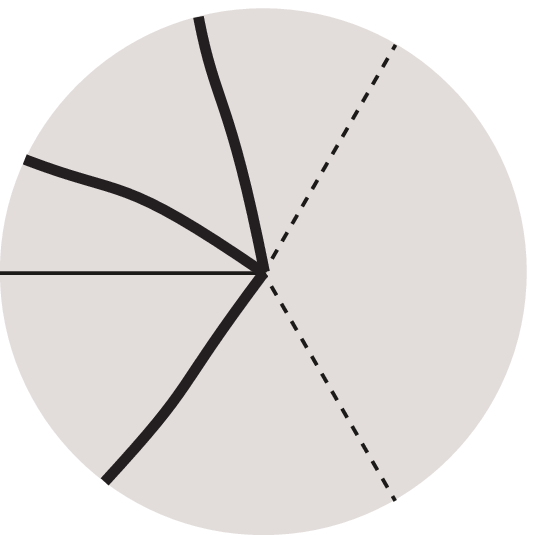}
\hspace{1mm}
\includegraphics[width=.18\linewidth]{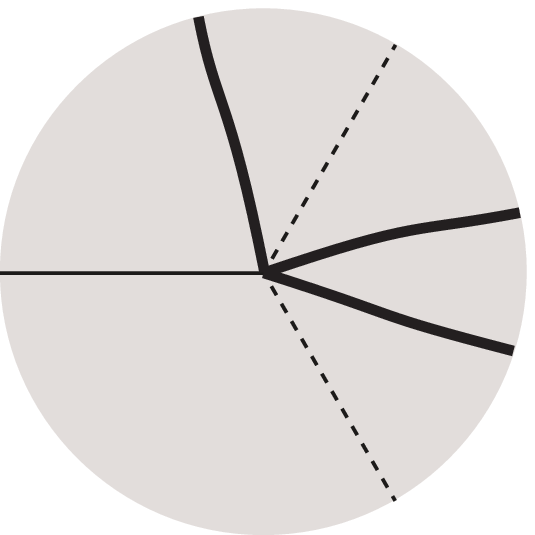}
\hspace{1mm}
\includegraphics[width=.18\linewidth]{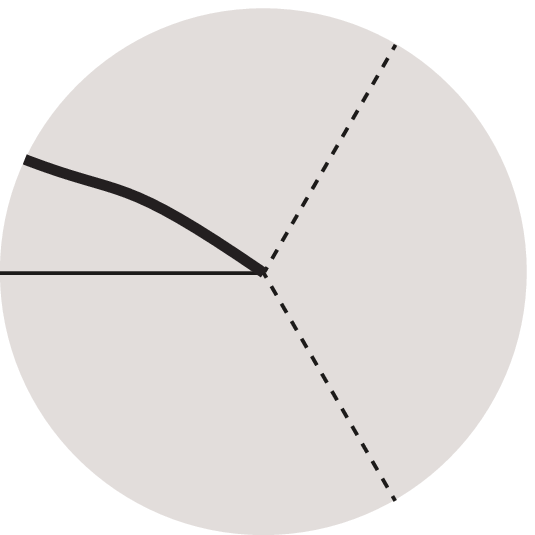}
\hspace{1mm}
\includegraphics[width=.18\linewidth]{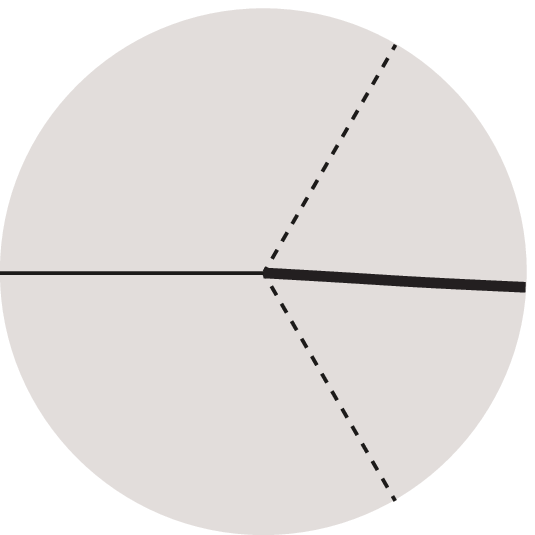}
\caption{The configurations $(113),(122),(1),(2)$ (left to right)
of parabolic curves in the case of $\ep_1=-1$}
\label{fig:parap}
\end{figure}
\begin{proof}
We show the assertion by the same method as in the proof of 
Theorem \ref{thm:numberpara}.
Since the method is completely the same, we state here
just the information about sign changes.
We set 
$q_1(t)=q(t)$ as in \eqref{eq:confd4m0200},
$q_2(t)=q_1(-t)$,
$q_3(t)=q_1(t+1)$,
$q_4(t)=q_1(t-1)$,
$q_5(t)=q_1(-t+1)$ and
$q_6(t)=q_1(-t-1)$.
Then each coefficient of these polynomials is
one of $\xi,c_3,3\xi+c_3,3\xi-c_3,\xi+c_3,\xi-c_3$ and $c_4$.
The necessary number of sign changes of these polynomials
under the conditions of each case are given in Table \ref{tab:signchange},
and
it takes all the possibilities of signs of
$\xi$, $c_3$, $3\xi+c_3$, $3\xi-c_3$, $\xi+c_3$, $\xi-c_3$ and $c_4$,
the assertion for the case of $r>0$ is proven.
\begin{table}[ht!]
\centering
\begin{tabular}{|c|c|c|c|c|c|c|c|}
\hline
case   &$q_1$&$q_2$&$q_3$&$q_4$&$q_5$&$q_6$\\
\hline
(I-1-1),
(I-2-1),
(IV-2-1)&1    &     &1    &     &     &0    \\
\hline
(I-1-2),
(I-2-2)&     &1    &1    &     &     &0    \\
\hline
(II-1-1),
(II-2-1),
(II-3-1),
(III-1-1)&     &1    &0    &     &     &1    \\
\hline
(II-1-2),
(II-2-2)&1    &     &0    &     &     &1    \\
\hline
(III-2-1)&     &1    &     &     &1    &1    \\
\hline
(IV-1-1)&1    &     &1    &1    &     &     \\
\hline
\end{tabular}
\caption{Necessary number of sign changes.}
\label{tab:signchange}
\end{table}
Since $(-r\cosh\theta,-r\sinh\theta)$ is a $\pi$-rotation
of $(r\cosh\theta,r\sinh\theta)$,
the configurations of the case $r<0$ can be obtained by interchanging
$\Omega_1$ with $\tilde \Omega_3$,
$\Omega_2$ with $\tilde \Omega_2$ and
$\Omega_3$ with $\tilde \Omega_1$.
\end{proof}

\subsection{Number of ridges and subparabolic curves}\label{sec:numproof}
Here we show Theorem \ref{lem:ridgenum}.
A pair of two hyperbolic-trigonometric polynomials
$$
g_n(s)=\sum_{1\leq i+j\leq n}a_{i,j}\cosh^i s\,\sinh^j s
\text{\ and\/\ }
h_n(s)=\sum_{1\leq i+j\leq n}b_{i,j}\cosh^j s\,\sinh^i s,
$$
satisfying\/ $a_{i,j}=b_{i,j}=0$ for\/ any odd\/ $i+j$,
or\/ $a_{i,j}=b_{i,j}=0$ for any even\/ $i+j$,
are said to be {\it adapted}\/ if
by setting $\cosh s=1/(1-t^2)^{1/2}$,
$\sinh s=t/(1-t^2)^{1/2}$, ($t=\tanh s$),
the polynomial $(1-t^2)^{n/2}g_n(s)$ with respect to $t$,
and
the polynomial $(1-t^2)^{n/2}h_n(s)/t^n$ with respect to $1/t$,
are the same.
For example, since
$\cosh 3s=(1-t^2)^{-3/2}(1+3t^2)$,
$\sinh 3s=(1-t^2)^{-3/2}t^3(1+3t^{-2})$, the pair
$\cosh 3s$ and $\sinh 3s$ is adapted.
\begin{lemma}\label{lem:trinumber}
{\rm (1)} The number of roots of\/
$f_n(s)=\displaystyle\sum_{i+j=1}^{n}a_{i,j}\cos^i s\,\sin^j s$
is at most\/ $n$ for\/ $s\in [0,\pi)$
if\/ $d(f_n)/ds(s)\ne0$ for all\/ $s$ satisfying\/ $f_n(s)=0$.
{\rm (2)}
We assume that the pair of two hyperbolic-trigonometric polynomials
$g_n(s)$ and $h_n(s)$
is adapted.
Then
the sum of the numbers of roots\/ $s\in\R$ 
of\/ $g_n(s)=0$ and\/ $h_n(s)=0$ 
is at most\/ $n$
under the condition\/
$d(g_n)/ds(s)\ne0$ {\rm (}respectively, $d(h_n)/ds(s)\ne0)$ 
for all\/ $s$ satisfying\/ $g_n(s)=0$ {\rm (}respectively, $h_n(s)=0)$.
\end{lemma}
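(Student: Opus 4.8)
The plan is to turn each of the two statements into a count of real roots of an ordinary polynomial, exploiting the fact that all the monomials share a common parity of $i+j$ (the standing structural hypothesis here, the same one built into the definition of adapted pairs). That common parity is exactly what makes the relevant roots split into two disjoint, separately controllable families; without it the trigonometric bound in (1) already fails, so I would make its role explicit.

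For (1), I would first observe that each $\cos^i s\,\sin^j s$ is a real trigonometric polynomial of degree $i+j\le n$, so $f_n$ is a real trigonometric polynomial of degree at most $n$. If $f_n\equiv0$ the derivative hypothesis fails, so I may assume $f_n\not\equiv0$ and invoke the classical bound that such a polynomial has at most $2n$ zeros in one full period $[0,2\pi)$. The decisive point is the antipodal symmetry: since $\cos(s+\pi)=-\cos s$ and $\sin(s+\pi)=-\sin s$, the common parity $\delta$ of $i+j$ gives $f_n(s+\pi)=(-1)^\delta f_n(s)$, so the zero set in $[0,2\pi)$ is invariant under $s\mapsto s+\pi$. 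This shift pairs the zeros in $[0,\pi)$ with those in $[\pi,2\pi)$, so at most half of the $\le 2n$ zeros lie in $[0,\pi)$, which is the bound $n$. The hypothesis $df_n/ds\neq0$ at zeros ensures all zeros are simple, so the root count is the count of distinct zeros with no multiplicity inflation.

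For (2), I would use precisely the substitution from the definition of adapted pairs. Writing $t=\tanh s\in(-1,1)$ with $\cosh s=(1-t^2)^{-1/2}$ and $\sinh s=t(1-t^2)^{-1/2}$, one computes $(1-t^2)^{n/2}\cosh^i s\,\sinh^j s=t^{j}(1-t^2)^{(n-i-j)/2}$, so $P(t):=(1-t^2)^{n/2}g_n(s)$ and $\tilde P(t):=(1-t^2)^{n/2}h_n(s)$ are polynomials of degree at most $n$. The adapted condition says exactly that $\tilde P(t)=t^{n}P(1/t)$, i.e. $\tilde P$ is the coefficient-reversal of $P$. Because $s\mapsto\tanh s$ is a bijection $\R\to(-1,1)$, the real zeros of $g_n$ correspond bijectively to the roots of $P$ in $(-1,1)$, and the real zeros of $h_n$ correspond to the roots of $\tilde P$ in $(-1,1)$, which by the reversal relation are the reciprocals of the roots of $P$ lying outside $[-1,1]$. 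Hence the two root sets are \emph{disjoint} subsets of the real zero set of the single polynomial $P$ — one inside $(-1,1)$, the other with $|t|>1$ — so their combined number is at most the number of real roots of $P$, which is at most $\deg P\le n$.

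The main obstacle, and the step I would check most carefully, is the bookkeeping in (2): confirming that the reversal identity $\tilde P(t)=t^{n}P(1/t)$ drops cleanly out of the adapted hypothesis (so that the $t^{n}$ factor and the parity make every exponent $(n-i-j)/2$ a nonnegative integer), and that the correspondence loses nothing at the excluded values. The endpoints $t=\pm1$ correspond to $s\to\pm\infty$ and are never attained, so roots of $P$ there feed neither count; and $t=0$ (that is $s=0$) is harmless as long as $\deg P=n$, the generic case secured by the nondegeneracy implicit in the derivative hypotheses. Finally I would record that $dg_n/ds\neq0$ and $dh_n/ds\neq0$ at zeros translate, via the chain rule and $1-t^2>0$, into $P$ and $\tilde P$ having only simple roots, which is the transversality the geometric application in Theorem \ref{lem:ridgenum} requires.
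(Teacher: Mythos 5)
Your part (2) is essentially the paper's own proof, written out in more detail: the authors likewise substitute $t=\tanh s$, observe that $g_n$ becomes a polynomial $P$ of degree at most $n$ evaluated on $(-1,1)$ while adaptedness makes $h_n$ become the reversed polynomial $t^nP(1/t)$, and conclude from the fact that $\tanh$ maps $\R$ bijectively onto $(-1,1)$, so that the two root families sit in $|t|<1$ and $|t|>1$ inside the root set of the single polynomial $P$. Part (1) is where you genuinely diverge: the paper (following the cited Lemma 2 of Hasegawa) substitutes $\cos s=\pm(1+\tan^2 s)^{-1/2}$, $\sin s=\pm\tan s\,(1+\tan^2 s)^{-1/2}$ and reduces the count to a polynomial equation of degree $2n$ in $\tan s$, whereas you invoke the classical bound of $2n$ zeros per period for a nonzero degree-$n$ trigonometric polynomial and halve it via the antipodal symmetry $f_n(s+\pi)=(-1)^{\delta}f_n(s)$. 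Your route is cleaner, does not even need the transversality hypothesis, and has the virtue of making explicit the hypothesis that both arguments secretly require: a common parity of $i+j$. You are right that (1) fails without it --- for instance $f_2(s)=\cos s\sin s-\ep(\cos s+\sin s)$ with small $\ep>0$ has three zeros in $[0,\pi)$ (transversal for generic $\ep$) although $n=2$ --- and the paper's statement of (1) indeed omits this hypothesis, even though every function to which the lemma is applied in the proof of Theorem \ref{lem:ridgenum} is checked there to have only odd-degree or only even-degree terms.

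There is one slip in your part (2). You dismiss the possible root of $\tilde P$ at $t=0$ (equivalently, $s=0$ being a zero of $h_n$) on the grounds that $\deg P=n$ is ``secured by the nondegeneracy implicit in the derivative hypotheses.'' It is not: the hypotheses $dg_n/ds\ne0$, $dh_n/ds\ne0$ at zeros only force simple roots and do nothing to prevent the top coefficient $c_n$ of $P$ from vanishing, in which case $\tilde P(0)=c_n=0$ and $s=0$ really is a zero of $h_n$ unaccounted for by your reciprocal correspondence. The bound survives, but by a compensation you should state explicitly: if $c_n\ne0$, then $t=0$ is not a root of $\tilde P$ and your disjointness argument bounds the total by the number of real roots of $P$, hence by $n$; if $c_n=0$, then $\deg P\le n-1$, so $P$ has at most $n-1$ real roots, and adding the single extra root of $\tilde P$ at $t=0$ still gives a total of at most $n$. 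With that one correction your argument is complete.
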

\begin{proof}
If $s=\pm\pi/2$ are solutions, factoring out
$\cos s$ from $f_n(s)$, and we may assume 
$s=\pm\pi/2$ are not solutions of $f_n(s)=0$.
Setting $\cos s=\pm1/(1+\tan^2s)^{1/2}$ and
$\sin s=\pm\tan s/(1+\tan^2s)^{1/2}$, twice the number of roots of the
equation $f_n(s)=0$
can be reduced to
a polynomial equation with respect to $\tan s$ with degree $2n$.
This shows (1).
See \cite[Lemma 2]{haseimp} for a detailed proof.
For a proof of (2), 
setting $\cosh s=1/(1-t)^{1/2}$ and
$\sinh s=t/(1-t)^{1/2}$, $(t=\tanh s)$,
since $g_n$ and $h_n$ consist only of the terms where $i+j$ is odd or even,
and the equations $g_n(s)=0$, $h_n(s)=0$ 
are reduced to the same polynomial equations with respect to 
$t$ and $1/t$, respectively,
with degree $n$.
Since $\tanh s$ takes value in $(-1,1)$,
we see the assertion.
\end{proof}
The number $n$ of the above $f_n$
(respectively, $g_n$, $h_n$),
is called the {\it degree\/},
and it is denoted by $\deg(f_n)$ (respectively, $\deg(g_n)$, $\deg(h_n)$).
We give a proof of Theorem \ref{lem:ridgenum}
under the same assumption as in Proposition \ref{lem:ridges}.

\begin{proof}[Proof of Theorem \ref{lem:ridgenum}]
We see the degrees of
\begin{align*}
C_1(\theta)=&
a(\theta,0) (2 \lambda'E_0+3\lambda E_0')+2 \lambda E_0 a_z(\theta,0),\\
C_2(\theta)=&
-E_1 L_0 N_0+E_0'M_0 N_0 +2 E_0 (L_0 N_1-M_0 N_0'),\\
C_3(\theta)=&
2 E_0 N_0'-E_0'N_0.
\end{align*}
These appear as the conditions of
Proposition \ref{lem:ridges}.
We easily see 
$\deg(L_0)=\deg(a)=0$,
$\deg(\lambda)=\deg(\lambda')=3$,
and $\lambda$, $\lambda'$ have only odd degree terms.
Furthermore, we see 
$\deg(E_0)=\deg(E_0')=2,\deg(E_1)=4$
and these have only even degree terms.
For the degree of $a_z$, by
\eqref{eq:impld4190}
and
\eqref{eq:bifhyp0300},
 the degrres of $a_z$ is the same as that of $\alpha$ and $\beta$.
Thus $\deg(a_z)=4$,
and it has only even degree terms.
We now consider the degrees of $M_0,N_0,N_0',N_1$.
We assume $\ep_1=\ep_2=1$.
Since $M_0=\inner{\tilde A V }{\tilde \nu_z}(\theta,0)$,
and 
\begin{align*}
\tilde\nu_z&=(-\cosh\theta dY+\sinh\theta dX)_z\\
&=
-\cosh\theta (dY_xx_z+dY_yy_z+dY_z)
+
\sinh\theta (dX_xx_z+dX_yy_z+dX_z)
\end{align*}
at $(\theta,0)$ which has degree $3$, together with
$\deg(V )=1$, $\deg(dY_z)=\deg(dX_z)=2$,
we have $\deg(M_0)=4$.
Moreover, we see $M_0$ has only even degree terms.
Similarly, we see $\deg(N_0)=\deg(N_0')=3$ and they have only odd degree terms.
Since $\deg(\alpha)=\deg(\beta)=4$, it holds that
$\deg(x_{zz})=\deg(y_{zz})=\deg(X_{zz})=\deg(Y_{zz})=4$
 and they have only even degree terms.
Furthermore, since $b_z(\theta,0)=(0,0,1)$, $b_{zz}=(x_{zz},y_{zz},0)$,
$$
\tilde\nu_{zz}=
-\cosh\theta (dY_xx_{zz}+dY_yy_{zz}+dY_{zz})
+
\sinh\theta (dX_xx_{zz}+dX_yy_{zz}+dX_{zz})
$$
at $(\theta,0)$ and
$2(\alpha\sinh\theta-\beta\cosh\theta)=\sinh3\theta$,
we have $\deg(N_1)=7$, and has only odd degree terms.
Thus $\deg(C_1)=\deg(C_2)=9$ and $\deg(C_3)=5$.
On the other hand,
by \eqref{eq:impld4540}, \eqref{eq:bifhyp0100} and \eqref{eq:bifhyp0200},
the degrees of 
$C_i$ $(i=1,2,3)$ in the case of $\ep_2=-1$
are the same as the case of $\ep_2=1$.
Moreover, for each $i=1,2,3$, the pair
$C_i$ $(\ep_2=\pm1)$
of hyperbolic-trigonometric polynomial
is adapted,
so we have the assertion.
For the case of $\ep_1=-1$, 
we can obtain the degrees by similar calculations.
Summarizing up these degrees, Proposition \ref{lem:ridges}
and the fact that each $D_4^\pm$ singularity consists of two sheets
which correspond to $\ep_2=\pm1$ respectively,
we have the assertion.
\end{proof}


\medskip
\toukouchange{
{\footnotesize
\begin{flushright}
\begin{tabular}{l}
(Saji)\\
Department of Mathematics,\\
Graduate School of Science, \\
Kobe University, \\
Rokkodai 1-1, Nada, Kobe \\
657-8501, Japan\\
  E-mail: {\tt saji@math.kobe-u.ac.jp}\\
\\
(Santos)\\
Departamento de Matem{\'a}tica, Ibilce, \\
Universidade Estadual Paulista (Unesp), \\
R.~Crist{\'o}v{\~a}o Colombo, 2265, Jd Nazareth, \\
15054-000 S{\~a}o Jos{\'e} do Rio Preto, SP, Brazil\\
  E-mail: {\tt samuelp.santos@hotmail.com}
\end{tabular}
\end{flushright}}
\end{document}